\numberwithin{equation}{section}
\newcommand{\N}{\mathbb{N}}
\newcommand{\R}{\mathbb{R}}
\newcommand{\probspace}{\mathscr{P}}
\newcommand{\contspace}{\mathscr{C}}
\newcommand{\ener}{E}
\newcommand{\MOT}{\mathsf{MOT}}
\newcommand{\OT}{\mathsf{OT}}
\newcommand{\mres}{\mathbin{\vrule height 1.6ex depth 0pt width
0.13ex\vrule height 0.13ex depth 0pt width 1.3ex}}
\newcommand{\eps}{\varepsilon}
\newcommand{\hdm}{{\mathscr H}}
\newcommand{\lbm}{{\mathscr L}}
\newcommand{\dd}{\mathop{}\mathopen{}\mathrm{d}}
\newcommand{\entropy}{\mathrm{Ent}}
\newcommand{\mbf}[1]{\bm{#1}}
\newcommand{\loc}{\mathrm{loc}}
\DeclarePairedDelimiter\abs{\lvert}{\rvert}
\DeclarePairedDelimiter\norm{\lVert}{\rVert}
\DeclareMathOperator{\lipconst}{Lip}
\DeclareMathOperator{\diam}{diam}
\DeclareMathOperator{\tr}{Tr}
\DeclareMathOperator{\id}{Id}
\DeclareMathOperator*{\esssup}{ess\,sup}
\DeclareMathOperator{\spt}{spt}
\declaretheorem[name=Theorem,within=section]{theorem}
\declaretheorem[name=Lem\-ma,numberlike=theorem]{lemma}
\declaretheorem[name=Proposition,numberlike=theorem]{proposition}
\declaretheorem[name=Definition,numberlike=theorem,style=definition]{definition}
\declaretheorem[name=Remark,numberlike=theorem,style=remark]{remark}
\declaretheorem[name=Example,numberlike=theorem,style=remark]{example}
\let\svthefootnote\thefootnote\newcommand\freefootnote[1]{\let\thefootnote\relax\footnotetext{#1}\let\thefootnote\svthefootnote}
\begin{document}

\title{Convergence rate of entropy-regularized multi-marginal optimal transport costs}
\author{Luca Nenna\thanks{Universit\'e Paris-Saclay, CNRS, Laboratoire de math\'ematiques d'Orsay, 91405, Orsay, France. 
email: luca.nenna@universite-paris-saclay.fr}\;  and  Paul Pegon\thanks{CEREMADE, Université Paris-Dauphine, Université PSL, CNRS, Mokaplan, Inria Paris, 75016 Paris, France. email: pegon@ceremade.dauphine.fr}}

\date{\today}

\maketitle

\begin{abstract}
    We investigate the convergence rate of multi-marginal optimal transport costs that are regularized with the Boltzmann-Shannon entropy, as the noise parameter $\varepsilon$ tends to $0$. We establish lower and upper bounds on the difference with the unregularized cost of the form $C\varepsilon\log(1/\varepsilon)+O(\varepsilon)$ for some explicit dimensional constants $C$ depending on the marginals and on the ground cost, but not on the optimal transport plans themselves. Upper bounds are obtained for Lipschitz costs or locally semi-concave costs for a finer estimate, and lower bounds for $\contspace^2$ costs satisfying some signature condition on the mixed second derivatives that may include degenerate costs, thus generalizing results previously  in the two marginals case and for non-degenerate costs. We obtain in particular matching bounds in some typical situations where the optimal plan is deterministic.
\end{abstract}

\vskip\baselineskip\noindent
\textit{Keywords.} optimal transport, multi-marginal optimal transport, entropic regularization, Schr\"odinger problem, convex analysis, R\'enyi dimension.\\
\textit{2020 Mathematics Subject Classification.}  Primary: 49Q22 ; Secondary: 49N15, 94A17, 49K40.


\tableofcontents

\section*{Notations}
In all the article, $N \in \N^*$ denotes the dimension of the ambient space $\R^N$ and $m \in \N$ is an integer such that $m\geq 2$.
\newline\newline
\begin{tabu}  {X[2,c,m]  X[12,l,p]}
$B^d_r(x)$ & open Euclidean ball of radius $r$ centered at $x$ in $\R^d$, dropping the supscript $d$ when $d=N$;\\
$\omega_d$ & $d$-dimensional volume of $B^d_1(0)$;\\
$X_i$ & a subset of $\R^N$ for any index $i\in \{1, \ldots, m\}$\\
$\mbf X$ & product $X_1\times \ldots \times X_m$ whenever the $X_i$'s are $m$ subsets of $\R^N$;\\
$\mbf A_{-i}$ & product $\prod_{1\leq j\leq m, j\neq i} A_j$ if $\mbf A = A_1\times \ldots \times A_m \subseteq \mbf X$ and $i\in\{1,\ldots,m\}$;\\
$x_i,x,\mbf{x}$ & a point in $X_i$, in some $X_j, j\in \{1,\ldots,m\}$, and in $\mbf X$ respectively;\\
$x_q$ & $(x_i)_{i\in q}$ if $q \subseteq \{1, \ldots, m\}$ and $\mbf x \in \mbf X$;\\
$e_i$ & $i$-th coordinate map $e_i : \mbf x = (x_1,\ldots,x_m) \mapsto x_i$;\\
$A \Subset B$ & $A\subseteq K \subseteq B$ for some compact set $K$;\\
$\abs{\cdot}$ & Euclidean norm on $\R^N$;\\
$\norm{\cdot}$ & norm on $\R^N\times \ldots \times \R^N$ defined by $\norm{\mbf x} = \max_{1\leq i \leq m} \abs{x_i}$ if $\mbf x = (x_1,\ldots,x_m)$;\\
$B_r(\mbf x)$ & open ball of radius $r$ centered at $\mbf x\in (\R^N)^m$ for the above norm;\\
$\contspace_\loc^{0,1}(X)$ & space of real-valued locally Lipschitz functions on $X$ which is a sub-manifold of $\R^N$ or $(\R^N)^m$;\\
$[f]_{\contspace^{0,1}(X)}$ & Lipschitz constant of $f : X \to \R$ where $X$ is a subset of $\R^N$ or $(\R^N)^m$ for the above norms;\\
$\contspace_\loc^{1,1}(X)$ & space of differentiable real-valued functions on $X$, a sub-manifold of $\R^N$ or $(\R^N)^m$, with locally Lipschitz differential;\\
$\probspace(X)$ & space of probability measures on a metric space $X$;\\
$\norm{\cdot}_{L^p(\mu)}$ & $L^p$ norm induced by a measure $\mu$, where $p\in[1,+\infty]$;\\
$\spt{\mu}$ & support of the measure $\mu$;\\
$\mu \mres A$ & restriction of the Borel measure $\mu$ to the Borel set $A$ defined by $\mu \mres A(E) = \mu(A\cap E)$ for every $E$;\\
$\hdm^s_X$ & $s$-dimensional Hausdorff measure on the metric space $X$ endowed with the Borel $\sigma$-algebra (the subscript $X$ will often be dropped);\\
$M_N(\R)$ & space of real matrices of size $N \times N$, endowed with the Frobenius norm induced by the scalar product $A \cdot B \coloneqq\tr(A^T B)$, for $A,B\in M_N(\R)$;\\
$S_N(\R)$ & subspace of real symmetric matrices of size $N \times N$;\\
$\Delta_P$ & simplex of P-uples $t = (t_p)_{p\in P}$ such that $\forall p, t_p\geq 0$ and $\sum_{p\in P}t_p=1$.
\end{tabu}

\section{Introduction}

We consider a $m-$uple of probability measures $\mu_{i}$ compactly supported on sub-manifolds $X_i\subseteq\R^N$of dimension $d_i$ and a cost function 
 $c : X_1 \times \ldots \times X_m  \to \R_+$. The Entropic Multi-Marginal Optimal Transport problem is defined as :
\begin{equation}\label{MEOTpb}\tag{MOT$_\eps$}
\MOT_\eps\coloneqq\inf\left\{ \int_{X_1 \times \ldots \times X_m} c\dd\gamma + \eps \entropy(\gamma | \otimes_{i=1}^m \mu_i)\;|\;\gamma \in \Pi(\mu_1, \ldots, \mu_m)\right\},
\end{equation}
where $\Pi(\mu_1, \ldots, \mu_m)$ denotes the set of all probability measures $\gamma$ having $\mu_i$ as $i$-th marginal, i.e.\ $(e_i)_\sharp \gamma = \mu_i$ where $e_i : (x_1,\ldots,x_m) \mapsto  x_i$, for every $i \in \{1,\ldots,m\}$. The classical multi-marginal optimal transport problem corresponds to the case where $\eps=0$.
In the last decade, these two classes of problems (entropic optimal transport and multi-marginal optimal transport) have witnessed a  growing interest and they are now an active research topic.

Entropic optimal transport (EOT)  has found applications and proved to be an efficient way to approximate Optimal Transport (OT) problems, especially from a computational viewpoint. Indeed, when it comes to solving EOT by alternating Kullback-Leibler projections on the two marginal constraints, by the algebraic properties of the entropy such iterative projections correspond to the celebrated Sinkhorn's algorithm \cite{sinkhornRelationshipArbitraryPositive1964}, applied in this framework in the pioneering works \cite{cuturiSinkhornDistancesLightspeed2013, benamouIterativeBregmanProjections2015}. The simplicity and the good convergence guarantees (see \cite{franklinScalingMultidimensionalMatrices1989, marinoOptimalTransportApproach2020,carlierLinearConvergenceMultimarginal2022,ghosalConvergenceRateSinkhorn2022}) of this method compared to the algorithms used for the OT problems, then determined the success of EOT for applications in machine learning, statistics, image processing, language processing and other areas (see the monograph \cite{peyreComputationalOptimalTransport2019} or the lecture notes \cite{nutzIntroductionEntropicOptimal} and references therein ).

As concerns multi-marginal optimal transport (MOT),  it arises naturally in many different areas of applications,  including economics 
\cite{carlier2010matching}, financial mathematics 
\cite{beiglbock2013model,dolinsky2014martingale,dolinsky2014robust, Ennajietal22}, statistics 
\cite{bigot2018characterization,carlier2016vector}, image
processing \cite{rabin2011wasserstein}, tomography 
\cite{abraham2017tomographic}, machine learning 
\cite{Hassleretal21, Trillosetal22}, fluid dynamics 
\cite{brenier1989least} and quantum physics and chemistry, in
the framework of density functional theory 
\cite{buttazzo2012optimal,cotarDensityFunctionalTheory2013,frieseckeStrongInteractionLimitDensity2023}.
The structure of solutions to the multi-marginal optimal transport problem is a notoriously delicate issue, and is
still not well understood, despite substantial efforts on the part of many researchers 
 \cite{gangboOptimalMapsMultidimensional1998, Carlier03, CarlierNazaret08, Heinich05, passUniquenessMongeSolutions2011,passLocalStructureOptimal2012, KimPass14,kimMultimarginalOptimalTransport2015, ColomboDePascaleDiMarino15, colomboCounterexamplesMultimarginalOptimal2016, PassVargas21, MoameniPass17, PassVargas21-2}; see also the surveys \cite{passMultimarginalOptimalTransport2015} and \cite{dimarinoOptimalTransportationTheory2017}. 
Since $\MOT_\eps$ can be seen a perturbation of $\MOT_0$, it is natural to study the behaviour as $\eps$ vanishes. In this paper we are mainly interested in investigating the rate of convergence of the entropic cost  $\MOT_\eps$ to $\MOT_0$ under some mild assumptions on the cost functions and marginals.

In particular we are going to extend the techniques introduced in \cite{carlierConvergenceRateGeneral2023} for two marginals to the multi-marginal case
which will also let us generalize the  bounds in \cite{carlierConvergenceRateGeneral2023} to the case of degenerate cost functions.  For the two marginals and non-degenerate case we also refer the reader to a very recent (and elegant) paper \cite{malamutConvergenceRatesRegularized2023} where the authors push a little further the analysis of the convergence rate by disentangling the roles of $\int c\dd\gamma$ and the relative entropy in the total cost and deriving convergence rate for both these terms. Notice that concerning the convergence rate of the entropic multi-marginal optimal transport an upper bound has been already established in \cite{ecksteinConvergenceRatesRegularized2023}, which depends on the number of marginals and the quantization dimension of the optimal solutions to \labelcref{MEOTpb} with $\eps = 0$. Here we provide an improved, smaller, upper bound, which will depend only on the marginals, but not on the optimal transport plans for the un-regularized problem, and we also provide a lower bound depending on a signature condition on the mixed second derivatives of the cost function, that was introduced in \cite{passLocalStructureOptimal2012}. The main difficulty consists in adapting the estimates of \cite{carlierConvergenceRateGeneral2023} to the local structure of the optimal plans described in \cite{passLocalStructureOptimal2012}.

Our main findings can be summarized as follows: we establish two
{\bfseries upper bounds}, one valid for locally Lipschitz costs and a finer one valid for locally semi-concave costs. The proofs rely, as in
\cite{carlierConvergenceRateGeneral2023}, on a multi-marginal variant of the 
block approximation introduced in
\cite{carlierConvergenceEntropicSchemes2017}. Notice that in this case the bound
will depend only on the dimension of the
support of the marginals. Moreover, for locally semi-concave cost
functions, by exploiting Alexandrov-type results as in 
\cite{carlierConvergenceRateGeneral2023}, we improve the upper bound by a 
$1/2$ factor, obtaining the following inequality for some $C^*\in\R_{+}$ 
\begin{equation}
    \boxed{\MOT_\eps \leq \MOT_0 + \frac 12 \Biggl(\sum_{i=1}^m d_i-\max_{1\leq i\leq m}d_i \Biggr)\eps \log(1/\eps) + C^*\eps.}
\end{equation}
We stress that this upper bound is smaller or equal than the one provided in \cite[Theorem~3.8]{ecksteinConvergenceRatesRegularized2023}, which is of the form $\frac 12 (m-1)D \eps\log(1/\eps) + O(\eps)$ where $D$ is a quantization dimension of the support of an optimal transport plan. Thus $D$ must be greater or equal than the maximum dimension of the support of the marginals, and of course $\sum_{i=1}^m d_i - \max_{1\leq i \leq m} d_i \leq (m-1) \max_{1\leq i\leq m} d_i$. The inequality may be strict for example in the two marginals case with unequal dimension, as shown in \Cref{sec:examples}.

For the {\bfseries lower bound}, from the dual formulation of \labelcref{MEOTpb} we have
\[ \MOT_\eps \geq \MOT_0-\eps\log\int_{\prod_{i=1}^mX_i}e^{-\frac{E(x_1,\ldots,x_m)}{\eps}}\dd\otimes_{i=1}^m\mu_i(x_i), \]
where $E(x_1,\ldots,x_m)=c(x_1,\ldots,x_m)-
\oplus_{i=1}^m\phi_i(x_i)$ is the duality gap and 
$(\phi_1,\ldots,\phi_m)$ are Kantorovich potentials for the un-regularized problem \labelcref{MEOTpb} with $\eps = 0$.  By using the singular values 
decomposition of the bilinear form obtained as an average of mixed second derivatives of the cost and a signature 
condition introduced in 
\cite{passUniquenessMongeSolutions2011}, we are able to 
prove that $E$ detaches quadratically from the set $\{E=0\}$ and this allows us to estimate  the previous 
integral in the desired way as in 
\cite{carlierConvergenceRateGeneral2023} and improve the results in \cite{ecksteinConvergenceRatesRegularized2023} where only an upper bound depending on the quantization dimension of the solution to the un-regularized problem is provided. Moreover, this slightly more flexible use of Minty's trick compared to \cite{carlierConvergenceRateGeneral2023} allows us to obtain a lower bound also for degenerate cost functions in the two marginals setting. Given a $\kappa$ depending on a signature condition (see \labelcref{signature-condition}) on the second mixed derivatives of the cost, the lower bound can be summarized as follows
\begin{equation}
\boxed{\MOT_\eps \geq \MOT_0 + \frac \kappa 2 \eps\log(1/\eps) - C_* \eps.}
\end{equation}
for some $C_*\in\R_+$.
\medskip

The paper is organized as follows: in 
\Cref{sec:prel} we recall the multi-marginal optimal 
transport problem, some results concerning the
structure of the optimal solution, in particular the ones 
in \cite{passUniquenessMongeSolutions2011}, and define its entropy regularization. \Cref{sec:upper_bound} is devoted to the upper bounds stated in \Cref{value_general_upper_bound} and \Cref{upper_bound_refined}. In \Cref{sec:lower_bound} we establish the lower bound stated in \Cref{prop:lower-bound}. Finally, in \Cref{sec:examples} we provide some examples for which we can get  the matching bounds.

\section{Preliminaries}
\label{sec:prel}

Given $m$ probability compactly supported measures $\mu_i$ on sub-manifolds $X_i$ of dimension $d_i$ in $\mathbb{R}^N$ for $i\in\{1,\ldots,m\}$ and a continuous cost function $c: X_1 \times X_2 \times \ldots\times X_m \rightarrow \mathbb{R}_+$, the multi-marginal optimal transport problem consists in solving the following optimization problem
\begin{equation}
\tag{MOT}
\label{pb:mmot}
\MOT_0\coloneqq\inf_{\gamma\in\Pi(\mu_1,\ldots,\mu_m)}  \int_{\mbf X} c(x_1,\ldots,x_m)\dd\gamma
\end{equation}
where $\mbf X\coloneqq X_1 \times X_2 \times \ldots\times X_m$ and $\Pi(\mu_1,\ldots,\mu_m)$ denotes the set of probability measures on $\mbf X$ whose marginals are the $\mu_i$.  The formulation above is also known as the \emph{Kantorovich problem} and it amounts to a linear minimization problem over a convex, weakly compact set; it is then not difficult to prove the existence of a solution by the direct method of calculus of variations. Much of the attention in the optimal transport community is  rather focused on uniqueness and the structure of the minimizers. In particular, one is mainly interested in determining if the solution is concentrated on the graph of a function $(T_2,\ldots,T_m)$ over the first marginal, where $(T_i)_\sharp\mu_1=\mu_i$ for $i\in\{1,\ldots,m\}$, in which case this function induces a solution \textit{à la Monge}, that is $\gamma=(\id,T_2,\ldots,T_m)_\sharp\mu_1$.

In the two marginals setting, the theory is fairly well understood and it is well-known that under mild conditions on the cost function (e.g. twist condition) and marginals (e.g. being absolutely continuous with respect to Lebesgue), the solution to \labelcref{pb:mmot} is unique and is concentrated on the graph of a function ; we refer the reader to \cite{santambrogioOptimalTransportApplied2015} to have glimpse of it.
The extension to the multi-marginal case is still not well understood, but it has 
attracted recently a lot of attention due
to a diverse variety of applications.

In
particular in his seminal works \cite{passUniquenessMongeSolutions2011,passLocalStructureOptimal2012} Pass established some conditions, more restrictive than in the two marginals case, to ensure the existence of a solution concentrated on a graph.
In this work we rely on the following (local) result in \cite{passLocalStructureOptimal2012} giving an upper bound on the dimension of the support of the solution to \labelcref{pb:mmot}.
Let $P$ be the set of partitions of $\{1,\ldots,m\}$ into two non-empty disjoint subsets: $p=\{p_-,p_+\}\in P$ if $p_-\bigcup p_+=\{1,\ldots,m\}$, $p_-\bigcap p_+=\emptyset$ and $p_-,p_+\neq\emptyset$. Then for each $p\in P$ we denote by $g_p$ the bilinear form on the tangent bundle $T\mbf X$
\[ g_p \coloneqq D^2_{p_- p_+} c+ D^2_{p_+ p_-}c\quad\text{where}\quad D^2_{p q} c \coloneqq \sum_{i\in p,j\in q}D^2_{x_i x_j}c\]
for every $p,q \subseteq \{1,\ldots,m\}$, and $D^2_{x_ix_j} c\coloneqq \sum_{\alpha_i,\alpha_j} \frac{\partial^2 c}{{x_i^{\alpha_i} x_k^{\alpha_k}}} \dd x_i^{\alpha_i} \otimes \dd x_j^{\alpha_j}$, defined for every $i,j$ on the whole tangent bundle $T \mathbf{X}$. Define
\begin{equation}\label{def_convex_bilin}
G_c\coloneqq\Biggl\{\sum_{p\in P}t_pg_p\;|\;(t_p)_{p\in P}\in\Delta_P\Biggr\}
\end{equation}
to be the convex hull generated by the $g_p$, then it is easy to verify that each $g\in G_c$ is symmetric and therefore its signature, denoted by $(d^+(g),d^-(g),d^0(g))$, is well defined.
Then, the following result from \cite{passLocalStructureOptimal2012} gives a control on the dimension of the support of the optimizer(s) in terms of these signatures.
\begin{theorem}[Part of {\cite[Theorem~2.3]{passLocalStructureOptimal2012}}]
\label{thm:bound_support}
Let $\gamma$ a solution to \labelcref{pb:mmot} and suppose that the signature of some $g\in G_c$ at a point $\mbf x\in \mbf X $ is $(d^+,d^-,d^0)$, that is the number of positive, negative and zero eigenvalues. Then, there exists a neighbourhood $N_{\mbf x}$ of $\mbf x$ such that $N_{\mbf x}\bigcap \spt{\gamma}$ is contained in a Lipschitz sub-manifold of $\mbf{X}$ with dimension no greater than $\sum_{i=1}^m d_i-d^{+}$.
\end{theorem}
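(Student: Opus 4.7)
The plan is to exploit the $c$-cyclical monotonicity of $\spt\gamma$ together with a second-order Taylor expansion around $\mbf x$. Two-point $c$-monotonicity ensures that for every $\mbf x^0,\mbf y\in \spt\gamma$ and every partition $p=\{p_-,p_+\}\in P$, swapping the coordinates indexed by $p_+$ produces two competitor configurations whose total cost is no smaller than the original:
\[ c(\mbf x^0)+c(\mbf y)\leq c(x^0_{p_-},y_{p_+})+c(y_{p_-},x^0_{p_+}). \]
Writing $\mbf v=\mbf y-\mbf x^0$ in local charts on the $X_i$ around $\mbf x^0$, the first-order contributions cancel pair-wise (since $Dc\cdot \mbf v$ splits exactly as $D_{p_-}c\cdot v_{p_-}+D_{p_+}c\cdot v_{p_+}$), the pure-block second derivatives cancel as well, and only the mixed block survives. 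This yields the one-sided quadratic bound $g_p(\mbf v,\mbf v)\leq o(\abs{\mbf v}^2)$ for each $p\in P$. Averaging against arbitrary weights $(t_p)_{p\in P}\in \Delta_P$ promotes this to $g(\mbf v,\mbf v)\leq o(\abs{\mbf v}^2)$ for every $g\in G_c$, and in particular for the element with signature $(d^+,d^-,d^0)$ in the hypothesis.

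Next, I would turn this quadratic constraint into a Lipschitz graph. Diagonalize $g$ on the tangent space $T_{\mbf x}\mbf X=\bigoplus_{i=1}^m T_{x_i}X_i$ and split it as $E^+\oplus E^-$, where $E^+$ is the positive eigenspace (dimension $d^+$) and $E^-$ is the orthogonal direct sum of the negative and null eigenspaces (dimension $\sum_{i=1}^m d_i-d^+$). Decomposing $\mbf v=\mbf v^++\mbf v^-$ accordingly, the bound becomes
\[ \lambda\abs{\mbf v^+}^2-\Lambda\abs{\mbf v^-}^2\leq o(\abs{\mbf v}^2), \]
where $\lambda>0$ is the smallest positive eigenvalue of $g$ and $\Lambda\geq 0$ bounds $-g|_{E^-}$. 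For $\mbf v$ small enough this forces $\abs{\mbf v^+}\leq C\abs{\mbf v^-}$, so the orthogonal projection $\pi^-\colon N_{\mbf x}\cap \spt\gamma\to E^-$ is injective with Lipschitz inverse on its image. Hence $N_{\mbf x}\cap \spt\gamma$ is a Lipschitz graph over an open subset of $E^-$, i.e.\ a Lipschitz sub-manifold of $\mbf X$ of dimension $\sum_{i=1}^m d_i-d^+$, which is the desired conclusion.

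The main obstacle will be the Taylor expansion on the curved sub-manifolds $X_i$: one must work in chart coordinates (for instance geodesic normal coordinates at each $x_i^0$) so that the block derivatives entering $g_p$ coincide with the terms actually produced by the expansion up to an $o(\abs{\mbf v}^2)$ error absorbing the second fundamental forms of the $X_i$. A secondary technical point is passing from the infinitesimal bound at the base point $\mbf x$ to a uniform statement valid on a whole neighbourhood $N_{\mbf x}$: here one uses the continuity of $g$ and the openness of the signature condition to ensure that slightly perturbed constants $\lambda,\Lambda$ remain valid on all of $N_{\mbf x}\cap \spt\gamma$, which is what lets the Lipschitz estimate be chart-uniform rather than merely pointwise.
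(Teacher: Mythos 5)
Your argument is correct and reproduces the strategy of the cited source: the paper itself does not prove this statement but quotes it from Pass \cite{passLocalStructureOptimal2012}, whose proof of Theorem~2.3 rests on exactly the same ingredients you use — the two-point $c$-monotonicity inequality obtained by swapping the coordinates indexed by $p_+$ (equivalently, from the $c$-conjugate Kantorovich potentials), a second-order Taylor expansion whose surviving term is $\frac12 g_p(\mbf v,\mbf v)$, averaging over $p$ to reach any $g\in G_c$, and the block decomposition $E^+\oplus E^-$ of the tangent space to convert $\lambda\abs{\mbf v^+}^2\leq \Lambda\abs{\mbf v^-}^2 + o(\abs{\mbf v}^2)$ into a Lipschitz graph over $E^-$. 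Your closing caveats (uniformity of the $o(\abs{\mbf v}^2)$ error over base points via $\contspace^2$-continuity of $D^2 c$, and the harmless possible increase of $d^+$ nearby since only a lower bound on it is used) are the correct technical points to track.
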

\begin{remark}
For the following it is important to notice that by standard linear algebra arguments we  have for each $g\in G_c$ that $d^+(g)\leq \sum_{i=1}^m d_i-\max_id_i$. This implies that the smallest bound on the dimension of $\spt{\gamma}$ which \Cref{thm:bound_support} can provide is $\max_i d_i$. 
\end{remark}
\begin{remark}[Two marginals case]
When $m=2$, the only $g\in G_c$ coincides precisely with the pseudo-metric introduced by Kim and McCann in \cite{kimContinuityCurvatureGeneral2010}. Assuming for simplicity that $d_1=d_2=d$, they noted that $g$ has signature $(d,d,0)$ whenever $c$ is non-degenerate so  \Cref{thm:bound_support} generalizes their result since it applies even when non-degeneracy fails providing new information in the two marginals case: the signature of $g$ is $(r,r,2d-2r)$ where $r$ is the rank of $D^2_{x_1 x_2}c$. Notice that this will help us to generalize the results established in \cite{carlierConvergenceRateGeneral2023,ecksteinConvergenceRatesRegularized2023} to the case of a degenerate cost function.
\end{remark}
It is well known that under some mild assumptions the Kantorovich problem \labelcref{pb:mmot} is dual to the following 
\begin{equation}
\tag{MD}
\label{pb:mmot_dual}
    \sup\left\{  \sum_{i=1}^m\int_{X_i}\phi_i(x_i)\dd\mu_i \;|\;\phi_i \in \contspace_{b}(X_i),\text{ }\sum_{i=1}^m\phi_i(x^i) \leq c(x_1,\ldots,x_m)\right \}.
\end{equation}
Besides, it admits solutions $(\phi_i)_{1\leq i \leq m}$, called \emph{Kantorovich potentials}, when $c$ is continuous and all the $X_i$'s are compact, and these solutions may be assumed \emph{$c$-conjugate}, in the sense that for every $i \in \{1,\ldots,m\}$
\begin{equation}\label{c_conjugate}
\forall x\in X_i, \quad \phi_i(x) = \inf_{(x_j)_{j\neq i} \in \mbf X_{-i}} c(x_1,\ldots, x_{i-1},x,x_{i+1}, \ldots) - \sum_{1\leq j\leq m, j\neq i} \phi_j(x_j).
\end{equation}
We recall the entropic counterpart of \labelcref{pb:mmot}: given $m$ probability measures $\mu_i$ on  $X_i$ as before, and a continuous cost function $c: \mbf X \rightarrow \mathbb{R}_+$ the $\MOT_\eps$ problem is

\begin{equation}\tag*{\labelcref{MEOTpb}}
\MOT_\eps = \inf\left\{ \int_{X_1 \times \ldots \times X_m} c\dd\gamma + \eps \entropy(\gamma | \otimes_{i=1}^m \mu_i)\;|\;\gamma \in \Pi(\mu_1, \ldots, \mu_m)\right\},
\end{equation}
where $\entropy(\cdot|\otimes_{i=1}^m \mu_i)$ is the Boltzmann-Shannon relative entropy (or Kullback-Leibler divergence) w.r.t.\ the product measure $\otimes_{i=1}^m \mu_i$, defined for general probability measures $p,q$ as
\[
\entropy(p \,|\, q) = 
\begin{dcases*}
\displaystyle{\int_{\R^d} \rho \log(\rho)\, \dd q} & if $p = \rho q$,\\
+\infty & otherwise.
\end{dcases*}
\]
The fact that $q$ is a probability measure ensures that $\entropy(p \,|\, q) \geq 0$.
The dual problem of \labelcref{MEOTpb} reads as
\begin{equation}
\tag{MD$_\eps$}
    \label{pb:dual_meot}
    \MOT_\eps = \eps+ \sup\left\{  \sum_{i=1}^m\int_{X_i}\phi_i(x_i)\dd\mu_i -\eps\int_{\mbf X}e^{\frac{\sum_{i=1}^m\phi_i(x_i)-c(\mbf x)}{\eps}}\dd\otimes_{i=1}^m \mu_i\;|\;\phi_i \in \contspace_{b}(X_i)\right \},
\end{equation}
which is invariant by $(\phi_1,\ldots,\phi_m)\mapsto(\phi_1+\lambda_1,\ldots,\phi_m+\lambda_m)$ where $(\lambda_1,\ldots,\lambda_m)\in\R^m$ and $\sum_{i=1}^m\lambda_i=0$, see \cite{leonardSurveySchrodingerProblem2014,nutzStabilitySchrodingerPotentials2022,marinoOptimalTransportApproach2020} for some recent presentations. It admits an equivalent \enquote{log-sum-exp} form:

\begin{equation}\tag{MD$_\eps'$}\label{pb:dual_meot_lse}
    \MOT_\eps = \sup\left\{  \sum_{i=1}^m\int_{X_i}\phi_i(x_i)\dd\mu_i -\eps\log\left(\int_{\mbf X}e^{\frac{\sum_{i=1}^m\phi_i(x_i)-c(\mbf x)}{\eps}}\dd\otimes_{i=1}^m \mu_i\right)\;|\;\phi_i \in \contspace_{b}(X_i)\right \},
\end{equation}
which is invariant by the same transformations without assuming $\sum_{i=1}^m\lambda_i=0$.

From \labelcref{MEOTpb} and \labelcref{pb:dual_meot} we recover, as $\eps\to 0$, the unregularized multi-marginal optimal transport \labelcref{pb:mmot} and its dual \labelcref{pb:mmot_dual} we have introduced above.
The link between   multi-marginal optimal transport  and its  entropic regularization is very strong and a consequence of the $\Gamma-$convergence of \labelcref{MEOTpb} towards \labelcref{pb:mmot} (one can adapt the proof in  \cite{carlierConvergenceEntropicSchemes2017} or see \cite{benamouGeneralizedIncompressibleFlows2019,gerolinMultimarginalEntropytransportRepulsive2020} for $\Gamma-$convergence in some specific cases) is that
\[\lim_{\eps\to 0} \MOT_{\eps}=\MOT_0.\]

By the direct method in the calculus of variations and strict convexity of the entropy, one can show that \labelcref{MEOTpb}  admits a unique solution $\gamma_\eps$, called \emph{optimal entropic plan}. 
Moreover,  there exist $m$ real-valued Borel functions $\phi^\eps_i$ such that
\begin{equation}\label{eq:structure}
\gamma_\eps = \exp\Bigg(\frac{ \oplus_{i=1}^m \phi^\eps_i - c}{\eps}\Bigg)  \otimes_{i=1}^m \mu_i,
\end{equation}
where $\oplus_{i=1}^m \phi^\eps_i \coloneqq (x_1,\ldots,x_m) \mapsto \sum_{i=1}^m\phi_i^\eps(x_i)$, and in particular we have that
\begin{equation}
\label{eq:veps-rewritten}
\MOT_\eps = \sum_{i=1}^m \int_{X_i}\phi^\eps_i\,\dd\mu_i 
\end{equation}
and these functions have continuous representatives and are uniquely determined up a.e.\ to additive constants.
The reader is referred to the analysis of \cite{marinoOptimalTransportApproach2020}, to \cite{nennaNumericalMethodsMultiMarginal2016} for  the extension to the multi-marginal setting, and to \cite{borweinDecompositionMultivariateFunctions1992, borweinEntropyMinimizationDAD1994, csiszarDivergenceGeometryProbability1975, follmerEntropyMinimizationSchrodinger1997, ruschendorfClosednessSumSpaces1998} for earlier references on the two marginals framework.

The functions $\phi^\eps_i$ in \labelcref{eq:structure} are called \emph{Schr\"odinger potentials}, the terminology being motivated by the fact that they solve the dual problem \labelcref{pb:dual_meot} and are as such the (unique) solutions to the so-called \emph{Schr\"odinger system}: for all $i\in \{1,\ldots, m\}$,
\begin{equation}\label{eq:Ssystem}
\phi_i(x_i) = -\eps\log\int_{\mbf X_{-i}}e^{\frac{\oplus_{1\leq j\leq m, j\neq i} \phi^\eps_j-c(\mbf x)}{\eps}}\,\dd\otimes_{1\leq j\leq m, j\neq i} \mu_j\quad \textrm{for }\mu_i\textrm{-a.e.\ }x_i,
\end{equation}
where $\mbf X_{-i} = \prod_{1\leq j\leq m,j\neq i}^m X_j$. Note that \labelcref{eq:Ssystem} is a \enquote{softmin} version of the multi-marginal $c$-conjugacy relation for Kantorovich potentials.

\section{Upper bounds}
\label{sec:upper_bound}
We start by establishing an upper bound, which will depend on the dimension of the marginals, for locally Lipschitz cost functions. We will then improve it for locally semi-concave (in particular $\contspace^{2}$) cost functions.

\subsection{Upper bound for locally Lipschitz costs}
\label{subsec:upper_bound}
The natural notion of dimension which arises is the \emph{entropy dimension}, also called \emph{information dimension} or \emph{R\'enyi dimension} \cite{renyiDimensionEntropyProbability1959}.
\begin{definition}[R\'enyi dimension (following \cite{youngDimensionEntropyLyapunov1982})]
If $\mu$ is a probability measure over a metric space $X$, we set for every  $\delta > 0$,
\[H_\delta(\mu) = \inf \left\{ \sum_{n\in\N} \mu(A_n) \log(1/\mu(A_n)) \;|\; \forall n, \diam(A_n) \leq \delta, \text{ and } X = \bigsqcup_{n\in \N} A_n\right\},\]
where the infimum is taken over countable partitions $(A_n)_{n\in\N}$ of $X$ by Borel subsets of diameter less than $\delta$, and we define the \emph{lower and upper entropy dimension of $\mu$} respectively by:
\[
\underline \dim_R(\mu) \coloneqq \liminf_{\delta \to 0^+} \frac{H_\delta(\mu)}{\log(1/\delta)}, \quad \overline \dim_R(\mu) \coloneqq \limsup_{\delta \to 0^+} \frac{H_\delta(\mu)}{\log(1/\delta)}.
\]    
\end{definition}

Notice that if $\mu$ is compactly supported on a Lipschitz manifold of dimension $d$, then $N_\delta(\spt{\mu}) \leq d\log(1/\delta) + C$ for some constant $C > 0$ and $\delta \in (0,1]$, where $N_\delta(\spt{\mu})$ is the box-counting number of $\spt{\mu}$, i.e. the minimal number of sets of diameter $\delta > 0$ which cover $\spt{\mu}$. In particular, by concavity of $t\mapsto t\log(1/t)$, we have
\begin{equation}\label{upper_bound_box_counting_number}
H_\delta(\mu) \leq \log N_\delta(\spt{\mu}).
\end{equation}
We refer to the beginning of \cite[\S3.1]{carlierConvergenceRateGeneral2023} for additional information and references on R\'enyi dimension.

The following theorem establishes an upper bound for locally Lipschitz costs.

\begin{theorem}\label{value_general_upper_bound}
Assume that for $i \in \{1,\ldots,m\}$, $\mu_i \in \probspace(X_i)$ is a compactly supported measure on a Lipschitz sub-manifold $X_i$ of dimension $d_i$ and $c\in\contspace^{0,1}_\loc(\mbf X)$, then 
\begin{equation}
    \boxed{\MOT_\eps \leq \MOT_0 + \Biggl(\sum_{i=1}^m d_i-\max_{j\in\{1,\ldots,m\}}d_j \Biggr)\eps \log(1/\eps) + O(\eps).}
\end{equation}  
\end{theorem}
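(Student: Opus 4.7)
The plan is to adapt the block-approximation technique of \cite{carlierConvergenceEntropicSchemes2017,carlierConvergenceRateGeneral2023} to the multi-marginal setting, with the crucial twist that we only partition $m-1$ of the marginals. This turns out to be exactly what is needed to save the factor $\max_j d_j$ in the entropy estimate.

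Fix an optimal plan $\gamma_0$ for \labelcref{pb:mmot} and an index $i_*$ realizing $d_{i_*} = \max_j d_j$. For $\delta\in(0,1)$ and each $i\neq i_*$, partition $\spt\mu_i$ into Borel sets $(A^i_n)_n$ of diameter at most $\delta$ and cardinality $O(\delta^{-d_i})$, which is possible since $X_i$ is a $d_i$-dimensional Lipschitz sub-manifold. Set $\mbf A_{\mbf n}\coloneqq \prod_{i\neq i_*} A^i_{n_i} \subseteq \mbf X_{-i_*}$, denote by $\pi_{-i_*}:\mbf X \to \mbf X_{-i_*}$ the projection, and disintegrate $\gamma_0 = \int \tau_{\mbf y}\,\dd\nu_0(\mbf y)$, where $\nu_0 \coloneqq (\pi_{-i_*})_\sharp \gamma_0$ has marginals $\mu_i$ for $i\neq i_*$. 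I would define the multi-marginal block approximation
\[
\gamma^\delta \coloneqq \sum_{\mbf n} \frac{1}{(\otimes_{i\neq i_*}\mu_i)(\mbf A_{\mbf n})}\Bigl(\bigotimes_{i\neq i_*}\mu_i\mres A^i_{n_i}\Bigr) \otimes \lambda_{\mbf n}, \qquad \lambda_{\mbf n}\coloneqq \int_{\mbf A_{\mbf n}}\tau_{\mbf y}\,\dd\nu_0(\mbf y),
\]
with convention $0/0=0$. A direct check using $\lambda_{\mbf n}(X_{i_*}) = \nu_0(\mbf A_{\mbf n})$ together with the telescoping $\sum_{(n_k)_{k\neq i_*, j}}\nu_0(\mbf A_{\mbf n}) = \mu_j(A^j_{n_j})$ shows $\gamma^\delta \in \Pi(\mu_1,\ldots,\mu_m)$; moreover the key inequality $\lambda_{\mbf n}(B) = \gamma_0(\mbf A_{\mbf n}\times B) \leq \mu_{i_*}(B)$ guarantees $\gamma^\delta \ll \otimes_i\mu_i$.

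To bound the cost, one can exhibit on each slice $\mbf A_{\mbf n}\times X_{i_*}$ an explicit coupling between $\gamma_0$ and $\gamma^\delta$ that keeps the $i_*$-coordinate fixed and re-samples the other coordinates independently from the normalized $\otimes_{i\neq i_*}\mu_i$ on $\mbf A_{\mbf n}$; since the induced displacement occurs only in the coordinates $i\neq i_*$ and has norm at most $\delta$, the local Lipschitz assumption on $c$ yields $\int c\,\dd\gamma^\delta \leq \MOT_0 + L\delta$ for some $L$ depending on $[c]_{\contspace^{0,1}}$ near $\spt\gamma_0$. For the entropy, write $\lambda_{\mbf n} = r_{\mbf n}\mu_{i_*}$ with $r_{\mbf n}\in[0,1]$ and $\sum_{\mbf n} r_{\mbf n} = 1$ $\mu_{i_*}$-a.e. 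The density of $\gamma^\delta$ w.r.t.\ $\otimes_i\mu_i$ is then $\rho^\delta(\mbf x) = \sum_{\mbf n}r_{\mbf n}(x_{i_*})\one_{\mbf A_{\mbf n}}(\pi_{-i_*}\mbf x)/(\otimes_{i\neq i_*}\mu_i)(\mbf A_{\mbf n})$, and expanding $\int\rho^\delta\log\rho^\delta\,\dd\otimes_i\mu_i$ while discarding the nonpositive contribution $\sum_{\mbf n}\int r_{\mbf n}\log r_{\mbf n}\,\dd\mu_{i_*}$ gives, thanks to the product structure $(\otimes_{i\neq i_*}\mu_i)(\mbf A_{\mbf n}) = \prod_{i\neq i_*}\mu_i(A^i_{n_i})$ and to the fact that $\nu_0$ has $\mu_i$ as marginals,
\[
\entropy(\gamma^\delta\mid \otimes_{i=1}^m\mu_i) \leq -\sum_{\mbf n}\nu_0(\mbf A_{\mbf n})\log\prod_{i\neq i_*}\mu_i(A^i_{n_i}) = \sum_{i\neq i_*}\sum_{n_i}\mu_i(A^i_{n_i})\log\frac{1}{\mu_i(A^i_{n_i})},
\]
each inner sum being bounded by $d_i\log(1/\delta) + O(1)$ via \labelcref{upper_bound_box_counting_number}.

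Combining these two estimates gives $\MOT_\eps \leq \MOT_0 + L\delta + \eps(\sum_i d_i - d_{i_*})\log(1/\delta) + O(\eps)$, and choosing $\delta = \eps$ produces the announced bound. I expect the most delicate step to be the entropy computation: it is precisely the choice to leave the $i_*$-th factor \emph{unpartitioned} that lets the logarithm decouple marginal by marginal and eliminates the $d_{i_*}$ contribution; partitioning $\mu_{i_*}$ as well would only yield the weaker constant $\sum_i d_i$, as in the naive multi-marginal extension.
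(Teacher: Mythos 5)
Your proof is correct, but the construction of the competitor plan is genuinely different from the paper's. The paper partitions all $m$ supports into cells $A_i^{n_i}$ of diameter $\leq\delta$ and defines the block approximation $\gamma_\delta = \sum_{n\in\N^m}\gamma_0(\mbf A^n)\,\mu_1^{n_1}\otimes\cdots\otimes\mu_m^{n_m}$ exactly as in \cite{carlierConvergenceEntropicSchemes2017}; the savings $-\max_j d_j$ then emerges algebraically in the entropy computation, by splitting off one marginal (say $\mu_m$) and using $\gamma_0(\mbf A^n)\leq\mu_m(A_m^{n_m})$ to show that its contribution $\sum_n \gamma_0(\mbf A^n)\log\bigl(\gamma_0(\mbf A^n)/\mu_m(A_m^{n_m})\bigr)$ is nonpositive. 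You instead leave the $i_*$-th factor unpartitioned from the outset, disintegrate $\gamma_0$ over $\mbf X_{-i_*}$, and build $\gamma^\delta$ by tensoring the normalized product $\otimes_{i\neq i_*}\mu_i\mres A^i_{n_i}$ with the conditional slice $\lambda_{\mbf n}$; the corresponding nonpositive term $\sum_{\mbf n}\int r_{\mbf n}\log r_{\mbf n}\,\dd\mu_{i_*}$ is discarded. Both competitor plans agree with $\gamma_0$ on the $\sigma$-algebra generated by the cells, both satisfy $W_\infty(\cdot,\gamma_0)\leq\delta$ (your coupling even fixes the $i_*$-th coordinate), and both yield the same constant $\sum_i d_i-\max_j d_j$. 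The paper's construction has the advantage of being symmetric in the marginals until the final step and is reused verbatim in the semi-concave refinement (\Cref{upper_bound_refined}); yours has the advantage of making the source of the dimensional saving transparent from the definition of the approximation itself rather than from a posterior inequality. Your remark that partitioning $\mu_{i_*}$ as well would lose the saving is not quite what happens in the paper: one \emph{can} partition all marginals, as the paper does, provided one exploits the inequality $\gamma_0(\mbf A^n)\leq\mu_{i_*}(A_{i_*}^{n_{i_*}})$ rather than bounding each of the $m$ entropy contributions separately.
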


\begin{proof}
  Given an optimal plan $\gamma_0$ for $\MOT_0$, we use the so-called \enquote{block approximation} introduced in \cite{carlierConvergenceEntropicSchemes2017}. For every $\delta > 0$ and $i \in \{1,\ldots,m\}$, consider a partition $X_i = \bigsqcup_{n\in\N} A^n_{i}$ of Borel sets such that\footnote{We always consider the Euclidean distance over $\R^N$, but since the supports of the measures are compact and the sub-manifolds are Lipschitz, we may equivalently consider the intrinsic metric over the sub-manifolds: they are equivalent distances at small scale, i.e.\ for $\abs{y-x} \leq \delta_0$ for some $\delta_0>0$.} $\diam(A^n_{i}) \leq \delta$ for every $n\in\N$, and set
\[\mu^n_{i} \coloneqq\begin{dcases*}
\frac {\mu_i \mres A^n_{i}}{\mu_i(A^n_{i})}& if $\mu_i(A^n_{i}) > 0$,\\
0& otherwise,
\end{dcases*}\]
then for every $m$-uple $n = (n_1,\ldots,n_m)\in \N^m$, 
\[(\gamma_0)^n \coloneqq \gamma_0(A^{n_1}_{1} \times \ldots \times A^{n_m}_{m}) \mu^{n_1}_{1} \otimes\ldots\otimes \mu_{m}^{n_m},\]
and finally,
\[\gamma_\delta \coloneqq \sum_{n\in \N^m} (\gamma_0)^n.\]
By definition, $\gamma_\delta \ll \otimes_{i=1}^m \mu_i$ and we may check that its marginals are the $\mu_i$'s.
Besides, $\gamma_\delta(\mbf A) = \gamma_0(\mbf A)$ for every $\mbf A = \prod_{i=1}^m A_i^{n_i}$ where $n \in \N^m$, and for $\otimes_{i=1}^m \mu_i$-almost every $\mbf x = (x_1,\ldots,x_m) \in \prod_{i=1}^m A^{n_i}_i$,
\[\frac{\dd \gamma_\delta}{\dd\otimes_{i=1}^m \mu_i}(x_1,\ldots,x_m) \coloneqq\begin{dcases*} \frac{\gamma_0(A^{n_1}_{1} \times\ldots\times A^{n_m}_{m})}{\mu_1(A^{n_1}_{1}) \ldots \mu_m(A^{n_m}_{m})}& if $\mu_1(A^{n_1}_{1}) \ldots\mu_m(A^{n_m}_{m}) > 0$\\
0& otherwise. \end{dcases*}\]

Let us compute its entropy and assume for simplicity that the measure $\mu_m$ is the one such that $\overline\dim_R(\mu_m)=\max_{i\in\{1,\ldots,m\}}\overline \dim(\mu_i)$:
\begin{align*}
\entropy(\gamma_\delta \,|\, \otimes_{i=1}^m \mu_i) &= \sum_{n\in \N^m} \int_{ \prod_{i=1}^mA^{n_i}_{i}} \log\left(\frac{\gamma_0(A^{n_1}_{1} \times\ldots\times A^{n_m}_{m})}{\mu_1(A^{n_1}_{1}) \ldots \mu_m(A^{n_m}_{m})}\right) \dd \gamma_\delta\\
&= \sum_{n\in \N^m} \gamma_0(A^{n_1}_{1} \times\ldots\times A^{n_m}_{m}) \log\left(\frac{\gamma_0(A^{n_1}_{1} \times\ldots\times A^{n_m}_{m})}{\mu_1(A^{n_1}_{1}) \ldots \mu_m(A^{n_m}_{m})}\right)\\
&= \begin{multlined}[t]
\sum_{n\in \N^m} \gamma_0(A^{n_1}_{1} \times\ldots\times A^{n_m}_{m}) \log\left(\frac{\gamma_0(A^{n_1}_{1} \times\ldots\times A^{n_m}_{m})}{\mu_m(A^{n_m}_{m})}\right)\\
+ \sum_{j=1}^{m-1}\sum_{n\in\N^m} \gamma_0(A^{n_1}_{1} \times\ldots\times A^{n_m}_{m}) \log(1/\mu_j(A^{n_j}_j))
\end{multlined}\\
&= \begin{multlined}[t]
\sum_{n\in \N^m} \gamma_0(A^{n_1}_{1} \times\ldots\times A^{n_m}_{m}) \log\left(\frac{\gamma_0(A^{n_1}_{1} \times\ldots\times A^{n_m}_{m})}{\mu_m(A^{n_m}_{m})}\right)\\
+ \sum_{j=1}^{m-1}\sum_{n_j\in \N} \gamma_0\left(\prod_{i=1}^{j-1}X_i \times A_j^{n_j}\times \prod_{i=j+1}^m X_i\right)\mu_j(A^{n_j}_{j}) \log(1/\mu_j(A^{n_j}_{j}))
\end{multlined}\\
& \leq \sum_{j=1}^{m-1}\sum_{n_j\in \N}\mu_j(A^{n_j}_{j}) \log(1/\mu_j(A^{n_j}_{j})) .
\end{align*}
the last inequality coming from the inequality $\gamma_0(A^{n_1}_{1} \times\ldots\times A^{n_m}_{m}) \leq \mu_m(A^{n_m}_{m})$. Taking partitions $(A^n_{j})_{n\in\N}$ of diameter smaller than $\delta$ such that $\sum_{n_j\in \N} \mu_j(A_j^{n_j}) \log(1/\mu_j(A_j^{n_j})) \leq H_\delta(\mu_j) + \frac 1{m-1}$ we get
\[\entropy(\gamma_\delta \,|\, \otimes_{i=1}^m \mu_i) \leq \sum_{j=1}^{m-1} H_\delta(\mu_j)+1.\]

Since the $\mu_i$'s have compact support and $c$ is locally Lipschitz, for $\delta$ small enough there exists $L\in (0,+\infty)$ not depending on $\delta$ such that $[c]_{\contspace^{0,1}(\mbf A)} \leq L$ for every $\mbf A \in \mathcal A \coloneqq \{ \prod_{i=1}^m A_i^{n_i} \;|\; n\in \N^m, \mu_1(A_1^{n_1}) \ldots \mu_m(A_m^{n_m}) > 0\}$. Notice that the $\infty$-Wasserstein distance (see \cite[\S 3.2]{santambrogioOptimalTransportApplied2015}) with respect to the norm $\norm{\cdot}$\footnote{The Wasserstein distance of order $p$ is defined here by $W_p^p(\mu,\nu) \coloneqq \inf\{ \int \norm{y-x}^p \dd\gamma(x,y) \;|\; \gamma \in \Pi(\mu,\nu)\}$ for $p\in [1,+\infty)$ and by $W_\infty(\mu,\nu) = \inf \{ \gamma-\esssup \,(x,y) \mapsto \norm{y-x}\;|\;\gamma \in \Pi(\mu,\nu)\}$ for $p=+\infty$.} satisfies $W_\infty(\gamma_\delta,\gamma_0) \leq \delta$. Indeed, $\diam \mbf A \leq \delta$ and $\gamma_0(\mbf A) = \gamma_\delta(\mbf A)$ for every $\mbf A \in \mathcal A$, so that $\Gamma \coloneqq \sum_{\mbf A\in \mathcal A} \gamma_0(\mbf A)^{-1} (\gamma_\delta \mres \mbf A) \otimes (\gamma_0 \mres \mbf A)$ is a transport plan from $\gamma_\delta$ to $\gamma_0$ satisfying $\Gamma-\esssup\,(\mbf x,\mbf{x'}) \mapsto \norm{\mbf{x'}-\mbf{x}} \leq \delta$. Thus taking $\gamma_\delta$ as competitor in \labelcref{MEOTpb} we obtain:
\begin{equation}\label{upper_bound_eps_delta}
\begin{aligned}
\MOT_\eps &\leq \int c \dd\gamma_\delta + \eps \sum_{j\leq m-1} H_\delta(\mu_j) + \eps\\
&= \MOT_0 + \sum_{\mbf A \in \mathcal A}\int_{\mbf A} c \dd(\gamma_\delta - \gamma_0) + \eps \sum_{j\leq m-1}H_\delta(\mu_j)+ \eps \\
&\leq \MOT_0 + \sum_{\mbf A \in \mathcal A} L W_\infty(\gamma_\delta\mres \mbf A,\gamma_0\mres \mbf A)\gamma_0(\mbf A) + \eps \sum_{j\leq m-1}H_\delta(\mu_j)+ \eps
\\
&\leq \MOT_0 + L \delta  + \eps\sum_{j\leq m-1} \frac{H_\delta(\mu_j)}{\log(1/\delta)}\log(1/\delta)+ \eps.
\end{aligned}
\end{equation}
Taking $\delta = \eps$ and recalling that the $\mu_j$'s are concentrated on sub-manifolds of dimension $d_j$, which implies that $H_\delta(\mu_j) \leq d_j \log(1/\delta) + \frac{C^*-1-L}{m-1}$ for some $C^*\geq L+1$ and for every $j\in \{1,\ldots,m\}$, we get
\[\MOT_\eps \leq \MOT_0 + \Bigl(\sum_{j\leq m-1} d_j\Bigr) \eps\log(1/\eps) + C^*\eps.\qedhere\]  
\end{proof}
\begin{remark}
If the $\mu_i$'s are merely assumed to have compact support (not necessarily supported on a sub-manifold), the above proof actually shows the slightly weaker estimate
\begin{equation}
    {\MOT_\eps \leq \MOT_0 + \Biggl(\sum_{i=1}^m \overline\dim_R(\mu_i)-\max_{j\in\{1,\ldots,m\}} \overline\dim_R(\mu_j) \Biggr)\eps \log(1/\eps) + o(\eps\log(1/\eps).}
\end{equation}
Indeed, for every $i$, by definition of $\overline \dim_R(\mu_i) \eqqcolon d_i$ we have $\frac{H_\delta(\mu_i)}{\log(1/\delta)} \leq \sup_{0<\delta' \leq \delta } \frac{H_{\delta'}(\mu_i)}{\log(1/\delta')} = d_i + o(1)$ as $\delta \to 0$ thus taking $\delta =\eps$ as above we have $\eps \frac{H_\delta(\mu_i)}{\log(1/\delta)} \log(1/\delta) \leq (d_i+o(1))\eps\log(1/\eps)$.

Besides, notice that by taking $m=2$ and $d_1=d_2=d$, one easily retrieves \cite[Proposition~3.1]{carlierConvergenceRateGeneral2023}.
\end{remark}

\subsection{Upper bound for locally semi-concave costs}
\label{ssubec:upper_bound_refined}
We provide now a finer upper bound under the additional assumptions that the $X_i$'s are $\contspace^2$ sub-manifolds of $\R^N$, $c$ is locally semi-concave as in \Cref{definition_loc_semiconcave} (which is the case when $c\in \contspace^2(\mbf X,\R_+)$), and the $\mu_i$'s are measures in $L^\infty(\hdm^{d_i}_{X_i})$ with compact support in $X_i$.

\begin{definition}\label{definition_loc_semiconcave}
    A function $f : X \to \R$ defined on a $\contspace^2$ sub-manifold $X \subseteq \R^N$ of dimension $d$ is \emph{locally semiconcave} if for every $x\in X$ there exists a local chart (i.e. a $\contspace^2$ diffeomorphism) $\psi : U\to \Omega$ where $U \subseteq X$ is an open neighborhood of $x$ and $\Omega$ is an open convex subset of $\R^d$, such that $f\circ {\mbf \psi}^{-1}$ is $\lambda$-concave for some $\lambda \in \R$, meaning $f\circ {\mbf \psi}^{-1}- \lambda\frac{\abs{\cdot}^2}2$ is concave on $\Omega$.
\end{definition}

\begin{lemma}[Local semiconcavity and covering]\label{potentials_semiconcavity}
    Let $c : \mbf X \to \R_+$ be a locally semiconcave cost function and $(\phi_i)_{1\leq i \leq m} \in \prod_{\leq i\leq m} \contspace(K_i)$ be a system of $c$-conjugate functions as in \labelcref{c_conjugate} defined on compact subsets $K_i\subseteq X_i$. We can find $\lambda \in\R, J\in \N^*$ and for every $i\in\{ 1,\ldots, m\}$ a finite open covering $(U_i^j)_{1\leq j\leq J}$ of $K_i$ together with bi-Lipschitz local charts $\psi_i^j : U_i^j \to \Omega_i^j$ satisfying the following properties, having set $\mbf \Omega^{\mbf j} \coloneqq \prod_{1\leq i\leq m} \Omega_i^{j_i}$ and ${\mbf \psi}^{\mbf j} \coloneqq (\psi_1^{j_1},\ldots, \psi_m^{j_m})$ for every $\mbf j = (j_1, \ldots, j_m) \in \{1,\ldots,J\}^m$:
    \begin{enumerate}[(i)]
    \item for every $\mbf j \in \{1,\ldots,J\}^m$, $c \circ ({\mbf \psi}^{\mbf{j}})^{-1}$ is $\lambda$-concave on $\mbf\Omega^{\mbf j}$,
    \item for every $(i,j) \in \{1,\ldots, m\} \times\{1,\ldots, J\}$, $\phi_i \circ (\psi_i^j)^{-1}$ is $\lambda$-concave on $\Omega_i^j$.
    \end{enumerate} 
    In particular, all the $\phi_i$'s are locally semiconcave.
\end{lemma}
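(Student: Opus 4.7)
The plan is to exploit the $\contspace^2$ submanifold structure of each $X_i$ together with the local semiconcavity of $c$ to build the covering, and then derive the semiconcavity of each $\phi_i$ by writing it, via the $c$-conjugate formula, as an infimum over a family of semiconcave functions sharing a common semiconcavity constant.

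First, around each point $\mbf x = (x_1,\ldots,x_m) \in \prod_i K_i$ I would pick a product chart $\mbf \psi(\mbf x) = (\psi_1(\mbf x),\ldots,\psi_m(\mbf x))$ from a product neighborhood $\mbf U(\mbf x) = \prod_i U_i(\mbf x)$ onto $\prod_i \Omega_i(\mbf x)$, with each $\Omega_i(\mbf x) \subseteq \R^{d_i}$ open and convex. Since local semiconcavity is preserved under $\contspace^2$ diffeomorphisms (up to modifying the constant), shrinking the $U_i(\mbf x)$ if necessary I may assume that $c \circ \mbf \psi(\mbf x)^{-1}$ is $\lambda(\mbf x)$-concave on $\prod_i \Omega_i(\mbf x)$. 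Compactness of $\prod_i K_i$ extracts a finite subcover, and after relabelling I may assume a common number $J$ of charts per marginal, giving $(U_i^j,\psi_i^j,\Omega_i^j)_{1\leq i \leq m,\,1\leq j \leq J}$. For any combination $\mbf j \in \{1,\ldots,J\}^m$, the product chart $\mbf \psi^{\mbf j}$ is a $\contspace^2$ diffeomorphism from $\mbf U^{\mbf j}$ onto $\mbf \Omega^{\mbf j}$, hence $c \circ (\mbf \psi^{\mbf j})^{-1}$ is locally semiconcave on $\mbf \Omega^{\mbf j}$ by invariance, and a compactness argument on its closure yields a finite constant $\lambda_{\mbf j}$. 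Setting $\lambda \coloneqq \min_{\mbf j} \lambda_{\mbf j}$ proves~(i).

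Next, for~(ii), I would fix $i \in \{1,\ldots,m\}$, $j \in \{1,\ldots,J\}$ and $y_i \in \Omega_i^j$, and expand the $c$-conjugate relation \labelcref{c_conjugate}:
\[\phi_i \circ (\psi_i^j)^{-1}(y_i) = \inf_{(x_k)_{k\neq i}}\Bigl[ c\bigl((\psi_i^j)^{-1}(y_i),(x_k)_{k\neq i}\bigr) - \sum_{k\neq i}\phi_k(x_k) \Bigr].\]
For each parameter $(x_k)_{k\neq i}$ (which, up to a compactness argument, may be assumed to lie in $\mbf K_{-i}$ since the $\phi_k$'s are only given on $K_k$), choose indices $(j_k)_{k\neq i}$ with $x_k \in U_k^{j_k}$ and set $\mbf j' \coloneqq (j_1,\ldots,j_{i-1},j,j_{i+1},\ldots,j_m)$. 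Applying~(i) to $\mbf j'$ shows $c \circ (\mbf \psi^{\mbf j'})^{-1}$ is $\lambda$-concave on $\mbf \Omega^{\mbf j'}$, so the restriction to its $i$-th variable is $\lambda$-concave on $\Omega_i^j$ with $\lambda$ independent of the parameters. An infimum of a family of functions sharing the same $\lambda$-concavity constant is itself $\lambda$-concave, which yields~(ii); local semiconcavity of each $\phi_i$ then follows from the finite covering of $K_i$.

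The principal obstacle is obtaining a single constant $\lambda$ valid simultaneously for all $J^m$ product charts and for all parameters $(x_k)_{k\neq i}$; both rest on compactness combined with the $\contspace^2$-invariance of local semiconcavity. A minor subtlety is that \labelcref{c_conjugate} takes its infimum over $\mbf X_{-i}$ rather than $\mbf K_{-i}$, but in the setting of the paper this is harmless as the ambient $X_i$'s are effectively compact and contain the supports of the compactly supported marginals.
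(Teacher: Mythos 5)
Your argument follows the paper's proof very closely: cover the compact sets by charts in which $c$ is quadratically dominated, make the semiconcavity constant uniform by finiteness, and then express $\phi_i\circ(\psi_i^j)^{-1}$ via the $c$-conjugacy relation as an infimum of uniformly $\lambda$-concave functions of the $i$-th variable, which is again $\lambda$-concave.

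There is, however, one genuine error: you set $\lambda \coloneqq \min_{\mbf j}\lambda_{\mbf j}$, but with the paper's convention ($f$ is $\lambda$-concave iff $f-\lambda\frac{\abs{\cdot}^2}{2}$ is concave, i.e.\ $D^2 f\leq \lambda\,\mathrm{Id}$) $\lambda$-concavity becomes \emph{weaker} as $\lambda$ increases, since subtracting an additional $(\lambda-\lambda_{\mbf j})\frac{\abs{\cdot}^2}{2}$ with $\lambda\geq\lambda_{\mbf j}$ preserves concavity. So to obtain a single constant valid for every $\mbf j\in\{1,\ldots,J\}^m$ (and then inherited by the $\phi_i$'s), one must take $\lambda\coloneqq\max_{\mbf j}\lambda_{\mbf j}$, exactly as the paper does; with the minimum the conclusion fails for any $\mbf j$ with $\lambda_{\mbf j}>\lambda$. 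Since the remainder of your proof relies on this single $\lambda$ working simultaneously for all product charts (so that the $c$-conjugate infimum is over a family of uniformly $\lambda$-concave functions), this is a step that would fail as written; replacing $\min$ by $\max$ repairs it and aligns your argument with the paper's.
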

\begin{proof}
    For every $i$, by compactness of the $K_i$'s we can find a finite open covering $(U_i^j)_{1\leq j\leq J}$ of $K_i$ and bi-Lipschitz local charts $\psi_i^j : U_i^j \to \Omega_i^j$ such that for every $\mbf j = (j_1, \ldots, j_m) \in \{1,\ldots, J\}^m$, $c \circ ({\mbf \psi}^{\mbf{j}})^{-1} - \lambda^{\mbf{j}}\frac{\abs{\cdot}^2}2$ is concave for some $\lambda^{\mbf{j}} \in \R$. We may assume that $\lambda^{\mbf j} = \lambda$ for every $\mbf{j}$, by taking $\lambda \coloneqq \max \{\lambda^{\mbf j}\;|\;\mbf j\in \{1,\ldots,J\}^m\}$. Fix $i \in\{1,\ldots,m\}, j \in\{1,\ldots,J\}$, then for every $\mbf k = (k_{\ell})_{\ell\neq i} \in \{1,\ldots,J\}^{m-1}$ set $\mbf{\hat k} = (k_1, \ldots, k_{i-1},j,k_{i+1}, \ldots)$. Notice that for every $y\in \Omega_i^j$,
    \begin{align*}
    \quad&\phi_i\circ(\psi_i^j)^{-1}(y)\\
    &= \inf_{(x_\ell)_{\ell\neq i} \in \mbf{K}_{-i}} c(x_1,\ldots, x_{i-1},(\psi_i^j)^{-1}(y),x_{i+1}, \ldots) - \sum_{\ell : \ell\neq i} \phi_\ell(x_\ell)\\
    &= \min_{\mbf k = (k_\ell)_{\ell\neq i}} \inf_{(y_\ell)_{\ell\neq i}\in \mbf{\Omega^{\hat k}}_{-i}}   c\circ ({\mbf \psi}^{\mbf{\hat k}})^{-1}(y_1, \ldots, y_{i-1},y,y_{i+1},\ldots) - \sum_{\ell : \ell\neq i} \phi_\ell\circ (\psi_\ell^{k_\ell})^{-1}(y_\ell),
    \end{align*}
    and we see that it is  $\lambda$-concave as an infimum of $\lambda$-concave functions.
\end{proof}

We are going to use an integral variant of Alexandrov Theorem which is proved in \cite{carlierConvergenceRateGeneral2023}.
\begin{lemma}[{\cite[Lemma~3.6]{carlierConvergenceRateGeneral2023}}] \label{lem:alexandrov-modified}
Let $f : \Omega \to \R$ be a $\lambda$-concave function defined on a convex open set $\Omega \subseteq \R^d$, for some $\lambda \geq 0$. There exists a constant $C \geq 0$ depending only on $d$ such that:
\begin{equation}\label{integral_lambda_convexity_Taylor_inequality}
\int_{\Omega} \sup_{y\in B_r(x)\cap \Omega} \abs{f(y) - (f(x)+ \nabla f(x) \cdot (y-x))} \dd x \leq C r^2 \hdm^{d-1}(\partial \Omega) ([f]_{\contspace^{0,1}(\Omega)} + \lambda \diam(\Omega)).
\end{equation}
\end{lemma}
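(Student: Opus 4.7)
The plan is to reduce to the case of a concave function and combine a convexity-based sub-mean-value estimate for the Taylor remainder with a Poincar\'e-type inequality for the Hessian measure. The boundary factor $\hdm^{d-1}(\partial\Omega)$ will emerge from the divergence theorem applied to $-\Delta g$.

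First I would set $g \coloneqq f - \frac{\lambda}{2}\abs{\cdot}^2$, which is concave on $\Omega$, and split
\[ f(y) - f(x) - \nabla f(x)\cdot(y-x) = \bigl[g(y) - g(x) - \nabla g(x)\cdot(y-x)\bigr] + \tfrac{\lambda}{2}\abs{y-x}^2. \]
The quadratic contribution is at most $\frac{\lambda r^2}{2}\abs{\Omega} \leq \frac{\lambda r^2}{2d}\diam(\Omega)\hdm^{d-1}(\partial\Omega)$ after integration, using the divergence-theorem identity $d\abs{\Omega} = \int_{\partial\Omega}(x-x_0)\cdot\nu\,\dd\hdm^{d-1}$ on a convex domain. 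Since $[g]_{\contspace^{0,1}(\Omega)} \leq [f]_{\contspace^{0,1}(\Omega)} + \lambda\diam(\Omega)$, one is reduced to proving $\int_\Omega\sup_{y\in B_r(x)\cap\Omega}\abs{E_g(x,y)}\,\dd x \leq Cr^2[g]_{\contspace^{0,1}}\hdm^{d-1}(\partial\Omega)$ for concave Lipschitz $g$, where $E_g(x,y) \coloneqq g(y) - g(x) - \nabla g(x)\cdot(y-x)$.

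The main obstacle is controlling the supremum in $y$ inside the outer integral: a direct Hardy--Littlewood maximal estimate on $-\Delta g$ fails because this is only a Radon measure of weak-$(1,1)$ type. The workaround exploits convexity: $\psi_x(y) \coloneqq -E_g(x,y)$ is convex, non-negative, and attains its minimum $0$ at $y=x$ with vanishing gradient. The geometric observation is that, for any maximizer $y^\star$ of $\psi_x$ on $\overline{B_r(x)}\cap\Omega$ with value $M$, the supporting half-space $\{z : \nabla\psi_x(y^\star)\cdot(z-y^\star) \geq 0\}$ carries $\psi_x \geq M$ by convexity and intersects $B_{2r}(x)$ in a set of volume bounded below by a dimensional multiple of $\abs{B_{2r}}$ (thanks to the $r$-scale Lipschitz bound on $\psi_x$). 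This yields a sub-mean-value estimate
\[ \sup_{y\in B_r(x)\cap\Omega}\abs{E_g(x,y)} \leq \frac{C_d}{\abs{B_{2r}}}\int_{B_{2r}(x)\cap\Omega}\abs{E_g(x,y)}\,\dd y. \]

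Integrating in $x$ and swapping the order of integration reduces the problem to bounding $\int_\Omega\int_{B_{2r}(y)\cap\Omega}\abs{E_g(x,y)}\,\dd x\,\dd y$. The concavity inequality $\abs{E_g(x,y)} \leq (\nabla g(x)-\nabla g(y))\cdot(y-x) \leq 2r\abs{\nabla g(x) - \nabla g(y)}$, combined with the BV--Poincar\'e inequality for $\nabla g$ centered at a Lebesgue point $y$, gives $\int_{B_{2r}(y)\cap\Omega}\abs{\nabla g(x) - \nabla g(y)}\,\dd x \leq Cr\abs{D^2 g}(B_{2r}(y))$. A further Fubini together with the pointwise algebraic bound $\|D^2 g(z)\|_{\mathrm{op}} \leq -\Delta g(z)$ (valid since $D^2g\leq 0$, so the operator norm is dominated by the sum of absolute eigenvalues, which equals $-\Delta g$) and the divergence theorem then yield
\[ \abs{D^2 g}(\Omega) \leq \int_\Omega(-\Delta g)\,\dd z = -\int_{\partial\Omega}\nabla g\cdot\nu\,\dd\hdm^{d-1} \leq [g]_{\contspace^{0,1}(\Omega)}\hdm^{d-1}(\partial\Omega). \]
All the differential manipulations above are justified rigorously by first mollifying $g$ to a smooth concave $g_\eps \coloneqq g * \rho_\eps$ with $[g_\eps]_{\contspace^{0,1}}\leq[g]_{\contspace^{0,1}}$, proving the estimate for $g_\eps$, and then passing to the limit $\eps \to 0^+$ using the lower semi-continuity of the left-hand side.
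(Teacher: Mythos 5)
The paper does not prove this lemma itself; it cites it verbatim as \cite[Lemma~3.6]{carlierConvergenceRateGeneral2023}, so there is no in-paper proof to compare against. Your overall plan (reduce to a concave $g$, obtain a sub-mean-value estimate on the Taylor remainder, couple it with a Poincar\'e inequality for $\nabla g$, and produce $\hdm^{d-1}(\partial\Omega)$ from the divergence theorem via $\lvert D^2 g\rvert\le -\Delta g$) is a sensible skeleton, and the reduction step, the treatment of the $\frac{\lambda}{2}\lvert y-x\rvert^2$ term via $d\lvert\Omega\rvert\le\diam(\Omega)\,\hdm^{d-1}(\partial\Omega)$, the inequality $\lvert E_g(x,y)\rvert\le(\nabla g(x)-\nabla g(y))\cdot(y-x)$, and the mollification argument are all correct.

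The proof as written has, however, a genuine gap at its central step. The sub-mean-value inequality
\[
\sup_{y\in B_r(x)\cap\Omega}\lvert E_g(x,y)\rvert \;\le\; \frac{C_d}{\lvert B_{2r}\rvert}\int_{B_{2r}(x)\cap\Omega}\lvert E_g(x,y)\rvert\,\dd y
\]
is false for thin convex domains. Your half-space argument shows that the set $\{\psi_x\ge M\}$ intersects $B_{2r}(x)$ in volume comparable to $\lvert B_{2r}\rvert$, but $\psi_x$ is defined only on $\Omega$, and what is needed is a lower bound on $\lvert H\cap B_{2r}(x)\cap\Omega\rvert$, which can be arbitrarily small relative to $\lvert B_{2r}\rvert$. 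Concretely, take $d=2$, $\Omega=(0,\eps)\times(0,1)$, $g(y)=\min(y_1,\eps-y_1)$, $x=(x_1,1/2)$ with $x_1<\eps/2$; then $\psi_x(y)=\max(0,2y_1-\eps)$, $\sup_{B_r(x)\cap\Omega}\psi_x\approx\eps$ as soon as $r\ge\eps$, while $\frac{1}{\lvert B_{2r}\rvert}\int_{B_{2r}(x)\cap\Omega}\psi_x\lesssim\eps^2/r$, so the claimed inequality forces $r\lesssim\eps$. Repairing this by extending $g$ to a concave Lipschitz function on $\R^d$ (the infimal convolution of its supports) makes the half-space argument valid, but then the Hessian mass appearing at the end lives on a dilated domain $\Omega_{cr}$, and $\hdm^{d-1}(\partial\Omega_{cr})$ is not controlled by $\hdm^{d-1}(\partial\Omega)$ uniformly in $\Omega$ (consider a needle $(0,\eps)^{d-1}\times(0,1)$ with $r\sim1$), so the fix is non-trivial. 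The second step you invoke, a \enquote{BV--Poincar\'e inequality centered at a Lebesgue point $y$}, is also not a standard tool: the pointwise estimate $\lvert\nabla g(y)-\fint_{B_{2r}(y)}\nabla g\rvert\lesssim\frac{r}{\lvert B_{2r}\rvert}\lvert D^2 g\rvert(B_{2r}(y))$ is not available for BV gradients, and the dyadic argument one would use instead produces the Hardy--Littlewood maximal function of $\lvert D^2 g\rvert$, which lies only in weak-$L^1$ and therefore does not close the integration in $y$. This part can be replaced by the symmetric Poincar\'e inequality $\int_U\int_U\lvert u(x)-u(y)\rvert\,\dd x\,\dd y\le C_d\diam(U)\lvert U\rvert\,\lvert Du\rvert(U)$ on convex sets $U=B_{2r}\cap\Omega$ together with a bounded-overlap covering, but as written the step is unjustified. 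Until these two points are addressed, the argument does not establish the lemma.
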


We may now state the main result of this section.

\begin{theorem}
\label{upper_bound_refined}
    Let $c\in\contspace^{2}(\mbf X)$ and assume that for every $i \in \{1,\ldots, m\}$, $X_i \subseteq \R^N$ is a $\contspace^2$ sub-manifold of dimension $d_i$ and $\mu_i \in L^\infty(\hdm^{d_i}_{X_i})$ is a probability measure compactly supported in $X_i$. Then there exists constants $\eps_0, C^*\geq 0$ such that for $\eps \in (0,\eps_0]$
    \begin{equation}
    \boxed{\MOT_\eps \leq \MOT_0 + \frac 12 \Biggl(\sum_{i=1}^m d_i-\max_{1\leq i\leq m}d_i \Biggr)\eps \log(1/\eps) + C^*\eps.}
\end{equation}
\end{theorem}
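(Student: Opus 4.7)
The strategy is to follow the block-approximation approach of \Cref{value_general_upper_bound}, but to upgrade the linear $O(\delta)$ transport-cost error into a quadratic $O(\delta^2)$ one by exploiting the $\contspace^2$ regularity of $c$ together with the local semiconcavity of the Kantorovich potentials $(\phi_i)_{1\leq i\leq m}$ supplied by \Cref{potentials_semiconcavity}. Optimizing $\delta = \sqrt\eps$ (rather than $\delta = \eps$) then produces the factor $1/2$.

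\textbf{Setup and a Taylor identity.} Fix optimal primal and dual objects $(\gamma_0, (\phi_i))$, with the $\phi_i$'s $c$-conjugate, so that \Cref{potentials_semiconcavity} yields a finite atlas of product charts covering $\prod_i \spt\mu_i$ in which $c$ and every $\phi_i$ are $\lambda$-concave. Construct the block approximation $\gamma_\delta$ as in \Cref{value_general_upper_bound}, refining the partitions (by compactness and a Lebesgue-number argument, valid for $\delta$ small enough) so that each block $\mbf A$ is contained in a single product chart. The entropy estimate goes through unchanged: after relabelling so that $d_m = \max_i d_i$,
\[\entropy(\gamma_\delta\,|\,\otimes_{i=1}^m \mu_i) \leq \Bigl(\sum_{i=1}^m d_i - \max_i d_i\Bigr)\log(1/\delta) + O(1).\]
Defining the duality gap $E \coloneqq c - \oplus_{i=1}^m \phi_i \geq 0$, which vanishes on $\spt\gamma_0$, equality of marginals gives $\int c \dd\gamma_\delta - \MOT_0 = \int E \dd\gamma_\delta$. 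Introduce the block coupling
\[\Gamma \coloneqq \sum_{\mbf A \in \mathcal A} \gamma_0(\mbf A)^{-1}(\gamma_\delta \mres \mbf A) \otimes (\gamma_0 \mres \mbf A) \in \Pi(\gamma_\delta,\gamma_0),\]
which is supported on pairs $(\mbf y, \mbf x)$ lying in a common block, so $\norm{\mbf y - \mbf x} \leq \delta$ for $\Gamma$-almost every $(\mbf y, \mbf x)$. Since $E(\mbf x) = 0$ for $\gamma_0$-a.e.\ $\mbf x$,
\[\int E \dd\gamma_\delta = \int [E(\mbf y) - E(\mbf x)] \dd\Gamma(\mbf y, \mbf x).\]
At $\gamma_0$-a.e.\ $\mbf x$, each $\phi_i$ is differentiable at $x_i$ (by Rademacher in the chart, as $\mu_i \in L^\infty(\hdm^{d_i})$), $c$ is $\contspace^2$ everywhere, and the Euler--Lagrange relation $\nabla_{x_i} c(\mbf x) = \nabla \phi_i(x_i)$ holds because $\mbf x$ is an interior minimiser of the semi-convex function $E$. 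Taylor-expanding at $\mbf x$ in the local chart and using the cancellation of linear terms gives
\[E(\mbf y) - E(\mbf x) = R_c(\mbf y, \mbf x) - \sum_{i=1}^m R_{\phi_i}(y_i, x_i),\qquad R_f(y,x) \coloneqq f(y) - f(x) - \nabla f(x)\cdot(y - x).\]

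\textbf{Quadratic bounds and conclusion.} Since $c \in \contspace^2$ on the compact $\mbf X$, $|R_c(\mbf y, \mbf x)| \leq C \norm{\mbf y - \mbf x}^2 \leq C\delta^2$ pointwise on $\spt\Gamma$, hence $\bigl|\int R_c \dd\Gamma\bigr| \leq C\delta^2$. For each $i$, let $\Gamma_i \coloneqq (e_i \times e_i)_\# \Gamma \in \Pi(\mu_i,\mu_i)$, supported in $\{|y_i - x_i| \leq \delta\}$; disintegrating in the second variable,
\[\Bigl|\int R_{\phi_i} \dd\Gamma\Bigr| \leq \int_{X_i} \sup_{y \in B_\delta(x) \cap X_i} |R_{\phi_i}(y, x)| \dd\mu_i(x).\]
Bounding $\mu_i \leq M\hdm^{d_i}$ and passing to local charts, \Cref{lem:alexandrov-modified} applied to $\tilde\phi_i = \phi_i \circ (\psi_i^j)^{-1}$ on each $\Omega_i^j$ with $r = \delta$ controls the right-hand side by $O(\delta^2)$; summing over the finite atlas, $\int E \dd\gamma_\delta \leq C^* \delta^2$. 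Plugging this into $\MOT_\eps \leq \MOT_0 + \int E \dd\gamma_\delta + \eps\entropy(\gamma_\delta\,|\,\otimes_{i=1}^m\mu_i)$ and setting $\delta = \sqrt\eps$, so that $\delta^2 = \eps$ and $\log(1/\delta) = \tfrac12 \log(1/\eps)$, yields the announced bound.

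\textbf{Main obstacle.} The delicate point is the quadratic control of the potential remainders: although the $\phi_i$'s may fail to be pointwise twice differentiable, the Alexandrov-type integral inequality of \Cref{lem:alexandrov-modified}, tested against the coupling $\Gamma_i$ whose marginals are absolutely continuous with respect to the $d_i$-dimensional Hausdorff measure, produces the required $O(\delta^2)$ estimate with constants that remain bounded as $\delta \to 0$. Secondary technicalities are the partition/chart compatibility and the justification of the first-order Euler--Lagrange condition $\gamma_0$-a.e., which follows from the $\mu_i$-a.e.\ differentiability of each $\phi_i$ together with $\spt\gamma_0 \subseteq \{E = 0\}$ and the interior-minimum structure of the semi-convex function $E$ on $\mbf X$.
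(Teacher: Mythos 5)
Your proposal is correct and follows essentially the same route as the paper: block approximation with $\delta = \sqrt\eps$, writing $\int E\,\dd\gamma_\delta$ against the coupling $\Gamma$, cancelling the first-order Taylor term via $\nabla E = 0$ at $\gamma_0$-a.e.\ point, bounding the remainder of $c$ by $O(\delta^2)$ pointwise from $\contspace^2$ regularity, and controlling the potential remainders through the integral Alexandrov estimate of \Cref{lem:alexandrov-modified} together with $\mu_i \ll \hdm^{d_i}$. The only cosmetic differences are that the paper bounds the $c$-remainder via $\lambda$-concavity of $c$ in charts rather than a direct $\contspace^2$ Hessian bound, and it tracks the bi-Lipschitz chart constants explicitly (so the radius fed to \Cref{lem:alexandrov-modified} is $L^{\mbf j}\delta$ rather than $\delta$), but neither affects the substance of the argument.
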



\begin{proof}
The measures $\mu_i$ being compactly supported in $X_i$, take for every $i \in \{1,\ldots,m\}$ an open subset $U_i$ of $X_i$ such that $\spt{\mu_i} \subseteq U_i \Subset X_i$ and define the compact set $K_i \coloneqq\bar U_i$. Take $(\phi_i)_{1\leq i \leq m} \in \prod_{1\leq i\leq m} \contspace(K_i)$ a $m$-uple of $c$-conjugate Kantorovich potentials and a transport plan $\gamma_0\in\Pi(\mu_1,\ldots,\mu_m)$ which are optimal for the unregularized problems \labelcref{pb:mmot_dual} and \labelcref{pb:mmot} respectively. In particular,
\begin{equation}\label{E_minimal}
\begin{aligned}
    E \coloneqq c -\oplus_{i=1}^{m}\phi_i &\geq 0&
    \text{on}&\quad \mbf U = \prod_{1\leq i\leq m} U_i,\\
    E &= 0&
    \text{on}&\quad \spt{\gamma_0} \subseteq \mbf U.
\end{aligned}
\end{equation}
For every $i \in \{1,\ldots,m\}$ we consider the coverings $(U_i^j)_{1\leq j\leq J}$ and bi-Lipschitz local charts $\psi_i^j : U_i^j \to \Omega_i^j$ for $j\in \{1,\ldots, J\}$ provided by \Cref{potentials_semiconcavity} and we notice by compactness that there exists open subsets $\tilde U_i^j \Subset U_i^j$ such that for a small $\delta_0 > 0$, the $\delta_0$-neighbourhood of $\tilde \Omega_i^j \coloneqq \psi_i^j(\tilde U_i^j)$ is included in $\Omega_i^j$ for every $j$, and $(\tilde U_i^j)_{1\leq j\leq J}$ is still an open covering of $K_i$. For $\delta \in (0,\delta_0)$ we consider the block approximation $\gamma_\delta$ of $\gamma_0$ built in the proof of \Cref{value_general_upper_bound}, as well as some $\kappa_\delta \in \Pi(\gamma_0,\gamma_\delta)$ such that $\sup_{(\mbf{x_0}, \mbf{x})\in \spt{\kappa_\delta}} \norm{\mbf{x_0}-\mbf{x}}\leq \delta$. For every $\mbf j = (j_1,\ldots,j_m) \in \{1,\ldots,J\}^m$, we set $E^{\mbf j} \coloneqq E \circ ({\mbf \psi}^{\mbf j})^{-1}$, $U^{\mbf j} \coloneqq \prod_{i=1}^m U_i^{j_i}$ and $\tilde U^{\mbf j} \coloneqq \prod_{i=1}^m \tilde U_i^{j_i}$, and we write:
\begin{align*}
    \int_{\mbf X} c \dd\gamma_\delta - \int_{\mbf X} c \dd\gamma_0 &= \int_{\mbf U} E \dd\gamma_\delta\\
    &= \int_{\mbf U\times \mbf U} E(\mbf{x}) \dd \kappa_\delta(\mbf{x_0},\mbf{x})\\
    &\leq \sum_{\mbf j \in \{1,\ldots,J\}^m} \int_{(\mbf{x_0}, \mbf{x}) \in\tilde U^{\mbf{j}} \times \mbf{U}} E(\mbf{x}) \dd \kappa_\delta(\mbf{x_0},\mbf{x})\\
    &\leq \sum_{\mbf j \in \{1,\ldots,J\}^m} \int_{(\mbf{x_0}, \mbf{x}) \in (U^{\mbf{j}})^2} E^{\mbf j}({\mbf \psi}^{\mbf j}(\mbf{x})) \dd \kappa_\delta(\mbf{x_0},\mbf{x}).
    \end{align*}
Notice that for every $\mbf j \in \{1,\ldots,J\}^m$ and $\gamma_0$-a.e.\ $\mbf{x_0} \in U^{\mbf j}$, $E^{\mbf j}$ is differentiable at ${\mbf \psi}^{\mbf j}(\mbf{x_0})$, or equivalently $E$ is differentiable at $\mbf{x_0}$. Indeed $c$ is differentiable everywhere, and for every $i \in \{1,\ldots,m\}$ and $j \in\{1,\ldots,J\}$, $\phi_i \circ (\psi_i^j)^{-1}$ is semi-concave thus differentiable $\lbm^{d_i}$-a.e.\ hence $\phi_i$ is differentiable $\mu_i$-a.e.\ on $U_i^j$ because $\mu_i \ll \hdm^{d_i}$ and $\psi_i^j$ is bi-Lipschitz, which in turn implies that $\oplus_{i=1}^m \phi_i$ is differentiable $\gamma_0$-a.e.\ on $U^{\mbf j}$ because $\gamma_0 \in \Pi(\mu_1,\ldots, \mu_m)$. Moreover, by \labelcref{E_minimal} we have $T_{{\mbf \psi}^{\mbf j}(\mbf{x_0})} E^{\mbf j} \equiv 0$ for $\gamma_0$-a.e.\ $\mbf{x_0} \in U^{\mbf j}$, where $T_{y_0} f$ designates the first order Taylor expansion $y \mapsto f(y_0) + \nabla f(y_0)\cdot (y-y_0)$ for any function $f$ which is differentiable at $y_0$. We may then compute:
    \begin{equation}\label{upper_ineq_cost}
        \begin{aligned}
        &\qquad \int_{\mbf X} c \dd\gamma_\delta - \int_{\mbf X} c \dd\gamma_0\\
        &\leq \sum_{\mbf j \in \{1,\ldots, J\}^m} \int_{(\mbf{x_0}, \mbf{x}) \in (U^{\mbf{j}})^2} \Bigl(E^{\mbf j}({\mbf \psi}^{\mbf j}(\mbf{x}))-T_{{\mbf \psi}^{\mbf j}(\mbf{x_0})} E^{\mbf j}\bigl({\mbf \psi}^{\mbf j}(\mbf{x})-{\mbf \psi}^{\mbf j}(\mbf{x_0})\bigr)\Bigr) \dd \kappa_\delta(\mbf{x_0},\mbf{x})\\
        &= \sum_{\mbf j = (j_1,\ldots, j_m)} \left(\int_{(\mbf{x_0}, \mbf{x}) \in (U^{\mbf{j}})^2} \Bigl(c^{\mbf j}({\mbf \psi}^{\mbf j}(\mbf{x}))-T_{{\mbf \psi}^{\mbf j}(\mbf{x_0})} c^{\mbf j}\bigl({\mbf \psi}^{\mbf j}(\mbf{x})-{\mbf \psi}^{\mbf j}(\mbf{x_0})\bigr)\Bigr) \dd \kappa_\delta(\mbf{x_0},\mbf{x}) \right.\\
        &\quad- \left.\sum_{i=1}^m \int_{(x_0, x) \in (U_i^{j_i})^2} \Bigl(\phi_i^{j_i}(\psi_i^{j_i}(x))-T_{\psi_i^{j_i}(x_0)} \phi_i^{j_i}\bigl(\psi_i^{j_i}(x)-\psi_i^{j_i}(x_0)\bigr)\Bigr)\dd (e_i,e_i)_\sharp \kappa_\delta(x_0,x)\right).
            \end{aligned}
    \end{equation}

Now, since $c^{\mbf j}$ is $\lambda$-concave on each $\Omega^{\mbf j} \coloneqq {\mbf \psi}^{\mbf j}(U^{\mbf j})$, whenever $\norm{\mbf{x_0}-\mbf{x}} \leq \delta$ we have
\begin{equation}\label{ineq_c}
c^{\mbf j}({\mbf \psi}^{\mbf j}(\mbf{x}))-T_{{\mbf \psi}^{\mbf j}(\mbf{x_0})} c^{\mbf j}\bigl({\mbf \psi}^{\mbf j}(\mbf{x})-{\mbf \psi}^{\mbf j}(\mbf{x_0})\bigr) \leq \lambda \frac{\abs{{\mbf \psi}^{\mbf j}(\mbf{x})-{\mbf \psi}^{\mbf j}(\mbf{x_0})}^2}2\leq \frac{m\lambda L^{\mbf j}}2 \delta^2,
\end{equation}
where $L^{\mbf j} \coloneqq \max\{\lipconst({\mbf \psi}^{\mbf j}),\lipconst(({\mbf \psi}^{\mbf j})^{-1})\}$. Besides, we may apply \Cref{lem:alexandrov-modified} to each $\phi_i^{j_i}$ over $\Omega_i^{j_i}$ to get
\begin{equation}\label{ineq_phi}
\begin{aligned}
    \qquad& \abs*{\int_{(x_0, x) \in (U_i^{j_i})^2} \Bigl(\phi_i^{j_i}(\psi_i^{j_i}(x))-T_{\psi_i^{j_i}(x_0)} \phi_i^{j_i}\bigl(\psi_i^{j_i}(x)-\psi_i^{j_i}(x_0)\bigr)\Bigr)\dd (e_i,e_i)_\sharp \kappa_\delta(x_0,x)}\\
    \leq& \int_{x_0 \in U_i^{j_i}} \sup_{y \in B_{L^{\mbf j}\delta}({\psi_i^{j_i}(x_0)})\cap \Omega_i^{j_i}}\abs*{\phi_i^{j_i}(y)-T_{\psi_i^{j_i}(x_0)} \phi_i^{j_i}\bigl(y-\psi_i^{j_i}(x_0)\bigr)} \dd (e_i,e_i)_\sharp \kappa_\delta(x_0,x)\\
    =& \int_{U_i^{j_i}} \sup_{y \in B_{L^{\mbf j}\delta}({\psi_i^{j_i}(x_0)})\cap \Omega_i^{j_i}}\abs*{\phi_i^{j_i}(y)-T_{\psi_i^{j_i}(x_0)} \phi_i^{j_i}\bigl(y-\psi_i^{j_i}(x_0)\bigr)} \dd \mu_i(x_0)\\
    \leq& \int_{\Omega_i^{j_i}} \sup_{y \in B_{L^{\mbf j}\delta}(y_0)\cap \Omega_i^{j_i}}\abs*{\phi_i^{j_i}(y)-T_{y_0} \phi_i^{j_i}\bigl(y-y_0\bigr)} \dd (\psi_i^{j_i})_\sharp \mu_i(y_0)\\
    \leq & \norm{\mu_i}_{L^\infty(\hdm^{d_i})} L^{\mbf j} C (L^{\mbf j} \delta)^2 \hdm^{d_i-1}(\partial \Omega_i^{j_i}) \left([\phi_i^{j_i}]_{\contspace^{0,1}(\Omega_i^{j_i})} + \lambda \diam(\Omega_i^{j_i})\right)\\
    \leq& C^{\mbf j} \delta^2,
\end{aligned}
\end{equation}
for some constant $C^{\mbf j} \in (0,+\infty)$ which does not depend on $\delta$. Reporting \labelcref{ineq_c} and \labelcref{ineq_phi} in \labelcref{upper_ineq_cost} yields
\[\int_{\mbf X} c \dd\gamma_\delta - \int_{\mbf X} c \dd\gamma_0 \leq \sum_{\mbf{j}\in \{1,\ldots, J\}^m} \left(\frac{m\lambda L^{\mbf j}}2 + C^{\mbf j}\right) \delta^2 \eqqcolon C' \delta^2.\]
Finally, we proceed as in the end of the proof of \Cref{value_general_upper_bound}, taking $\gamma_\delta$ as competitor in the primal formulation \labelcref{MEOTpb}, so as to obtain
\begin{align*}
    \MOT_\eps - \MOT_0 &\leq \int_{\mbf X} c \dd\gamma_\delta - \int_{\mbf X} c \dd\gamma_0 + \eps \sum_{i\leq m-1} H_\delta(\mu_j) + \eps \\
    &\leq C' \delta^2 + \eps \sum_{i\leq m-1} (d_i \log(1/\delta) + C''),
\end{align*}
where $C'' \in (0,+\infty)$ is a constant such that $H_\delta(\mu_i) \leq d_i \log(1/\delta) + C'' -1$. Taking $\delta = \sqrt{\eps}$ for $\eps \leq \delta_0^2$ yields
\[\MOT_\eps -\MOT_0 \leq \frac 12 \left(\sum_{i=1}^{m-1} d_i\right)\eps\log(1/\eps) + (C'+(m-1)C'')\eps,\]
and we obtain the desired estimate recalling that the index $i=m$ was chosen merely to simplify notations.
\end{proof}

\section{Lower bound for \texorpdfstring{$\contspace^2$}{C2} costs with a signature condition}
\label{sec:lower_bound}

In this section we consider a cost $c \in \contspace^2(\mbf X,\R_+)$ where $\mbf X = X_1 \times \ldots \times X_m$ and we will assume that for every $i\in \{1\ldots,m\}$, the measure $\mu_i$ is compactly supported on a $\contspace^2$ sub-manifold $X_i \subseteq \R^N$ of dimension $d_i$. We are going to establish a a lower bound in the same form as the fine upper bound of \Cref{upper_bound_refined}, the dimensional constant being this time related to the signature of some bilinear forms, following ideas from \cite{passLocalStructureOptimal2012}.

\begin{lemma}\label{gap_lemma}
    Let $c \in \contspace^2(\mbf X, \R_+)$ and $(\phi_1,\ldots, \phi_m) \in \contspace(K_1)\times \cdots \contspace(K_m)$ be a system of $c$-conjugate functions on subsets $K_i \subseteq X_i$ for every $i$. We set $\ener \coloneqq c- \phi_1\oplus \ldots \oplus \phi_m$ on $\mbf K \coloneqq K_1 \times \ldots \times K_m$
and we take $\mbf{\bar x} \in \mbf K$ as well as some $g_{\mbf{\bar x}} \in \{g(\mbf{\bar x}) \;|\; g\in G_c\}$ of signature $(d^+,d^-,d^0)$, $G_c$ being defined in \labelcref{def_convex_bilin}. Then there exists local coordinates around $\mbf{\bar x}$, i.e. $\contspace^2$ diffeomorphisms
\[u = (u^0, u^-, u^+) : U \subseteq \mbf{X} \to B_\rho^{d^+}(0)\times B_\rho^{d^-}(0) \times B_\rho^{d^0}(0),\]
$U$ being an open neighborhood of $\mbf{\bar x}$, such that if $\mbf x, \mbf{x'} \in B_r(\mbf{\bar x}) \subseteq U$,
\begin{equation}\label{gap_inequality}
\frac{\ener(\mbf{x'}) + \ener(\mbf{x})}2 \geq  \abs{u^+(\mbf{x'})-u^+(\mbf x)}^2-\abs{u^-(\mbf{x'})-u^-(\mbf x)}^2 -\eta(r)\abs{u(\mbf{x'})-u(\mbf x)}^2
\end{equation}
where $\eta(r)\geq 0$ tends to $0$ as $r\to 0$.
\end{lemma}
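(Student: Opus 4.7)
The plan is to exploit the fact that $\ener = c - \oplus_i \phi_i$ depends on the potentials only through univariate terms, which cancel under ``swap'' operations indexed by partitions $p \in P$, thus reducing the estimate to second-order information on $c$ alone --- a Minty-type trick in the spirit of \cite{passUniquenessMongeSolutions2011, carlierConvergenceRateGeneral2023}. For $\mbf{x}, \mbf{x}'$ close to $\mbf{\bar x}$ and $p = \{p_-, p_+\} \in P$, I introduce the swapped points $\mbf{y}^p, \tilde{\mbf{y}}^p$ defined coordinatewise by $y^p_i = x_i,\ \tilde y^p_i = x'_i$ for $i \in p_-$ and $y^p_i = x'_i,\ \tilde y^p_i = x_i$ for $i \in p_+$. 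Since $\{x_i, x'_i\} = \{y^p_i, \tilde y^p_i\}$ as unordered pairs for every $i$, the univariate potential contributions cancel and
\[\ener(\mbf{x}) + \ener(\mbf{x}') - \ener(\mbf{y}^p) - \ener(\tilde{\mbf{y}}^p) = c(\mbf{x}) + c(\mbf{x}') - c(\mbf{y}^p) - c(\tilde{\mbf{y}}^p).\]
Using $\ener \geq 0$ on $\mbf{K}$ --- which follows from the $c$-conjugacy of the $\phi_i$'s --- the nonnegative terms $\ener(\mbf{y}^p), \ener(\tilde{\mbf{y}}^p)$ may be dropped, yielding a lower bound for $\ener(\mbf{x}) + \ener(\mbf{x}')$ in terms of second differences of $c$ only.

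Writing $v = \mbf{x}' - \mbf{x}$ and $v_\pm$ for its projection onto the block of coordinates indexed by $p_\pm$, a $\contspace^2$ Taylor expansion of $c$ at $\mbf{x}$ gives
\[c(\mbf{x}) + c(\mbf{x}') - c(\mbf{y}^p) - c(\tilde{\mbf{y}}^p) = D^2 c(\mbf{x})(v_-, v_+) + o(|v|^2) = \tfrac12 g_p(\mbf{x})(v,v) + o(|v|^2),\]
the second equality following directly from the definition of $g_p$ and the symmetry $\partial^2_{ij} c = \partial^2_{ji} c$. Averaging over $p$ with weights $t_p \geq 0$ summing to $1$ chosen so that $g_{\mbf{\bar x}} = \sum_p t_p g_p(\mbf{\bar x})$, and using the continuity of each $g_p$ near $\mbf{\bar x}$, I obtain for $\mbf{x}, \mbf{x}' \in B_r(\mbf{\bar x})$
\[\tfrac12 \bigl(\ener(\mbf{x}) + \ener(\mbf{x}')\bigr) \geq \tfrac14 g_{\mbf{\bar x}}(v,v) - \eta_1(r)|v|^2,\]
for some $\eta_1(r) \to 0$ as $r \to 0$, the error bundling the uniform $\contspace^2$ Taylor remainder of $c$ together with the continuity gaps $g_p(\mbf{x}) - g_p(\mbf{\bar x})$.

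It then remains to convert $\tfrac14 g_{\mbf{\bar x}}(v,v)$ into the target signed squared-norm difference. I diagonalize the symmetric bilinear form $g_{\mbf{\bar x}}$ on $T_{\mbf{\bar x}} \mbf{X}$ (whose total dimension is $\sum_i d_i = d^+ + d^- + d^0$) in an orthonormal basis adapted to its positive, negative and null eigenspaces, and rescale so that $\tfrac14 g_{\mbf{\bar x}}(v,v) = |\pi^+ v|^2 - |\pi^- v|^2$ for the associated orthogonal projections $\pi^+, \pi^-$. I then take $u = (u^+, u^-, u^0) : U \to B^{d^+}_\rho(0) \times B^{d^-}_\rho(0) \times B^{d^0}_\rho(0)$ to be any $\contspace^2$ diffeomorphism whose differential at $\mbf{\bar x}$ realizes this splitting --- concretely, by composing local $\contspace^2$ submanifold charts for the $X_i$'s with the diagonalizing linear isomorphism. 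Being $\contspace^2$, $u$ satisfies $u^\pm(\mbf{x}') - u^\pm(\mbf{x}) = \pi^\pm v + O((r+|v|)|v|)$, hence $|u^\pm(\mbf{x}') - u^\pm(\mbf{x})|^2 = |\pi^\pm v|^2 + O(r|v|^2)$, while $|u(\mbf{x}') - u(\mbf{x})|^2 \asymp |v|^2$ by the diffeomorphism property. Substituting and absorbing all remaining $O(r|v|^2)$ and $o(|v|^2)$ terms into a single $\eta(r)|u(\mbf{x}') - u(\mbf{x})|^2$ with $\eta(r) \to 0$ yields \labelcref{gap_inequality}.

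The main technical delicacy I anticipate is keeping all the vanishing moduli uniform on $B_r(\mbf{\bar x})$ and merging them cleanly into a single $\eta(r)$: the second-order Taylor remainder for $c$, the continuity error from replacing $g_p(\mbf{x})$ by $g_p(\mbf{\bar x})$, and the $\contspace^2$ chart error must all be controlled simultaneously. The compactness of the relevant sets and the $\contspace^2$ regularity hypotheses make this a bookkeeping task rather than a conceptual obstacle.
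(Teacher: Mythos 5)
Your proposal is correct and follows essentially the same route as the paper's proof: a Minty-type swap argument over partitions $p$ to reduce $\ener(\mbf x)+\ener(\mbf{x'})$ to a second difference of $c$ alone (your use of $\ener\geq 0$ at the swapped points is logically equivalent to the paper's direct use of the $c$-conjugacy inequality), then a Taylor estimate in local charts, averaging over $p$ to produce $g_{\mbf{\bar x}}$, and a linear diagonalization to define $u$. The only cosmetic difference is that the paper uses the exact integral (double-integral) form of the Taylor remainder, which makes the uniformity of the modulus $\eta(r)$ immediate, whereas you use Landau notation and rightly flag the uniformity bookkeeping as the remaining technical point.
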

\begin{proof}
    Let $p = \{p_-,p_+\} \in P$. For $y \in \prod_{i\in p_\pm} K_i$, we set
        \[\phi_{p_\pm}(y) \coloneqq \sum_{i\in p_\pm} \phi_i(y_i).\] 
    We identify any $\mbf x \in \mbf K$ with $(x_{p_-},x_{p_+})$. Since the $\phi_i$'s are c-conjugate, for $\mbf x, \mbf{x'} \in \mathbf K$ it holds:
    \begin{align*}
        E(\mbf{x'}) &= c(x'_{p_-}, x'_{p_+}) - \phi_{p_-}(x'_{p_-}) - \phi_{p_+}(x'_{p_+})\\
        &\geq c(x_{p_-}',x_{p_+}') - (c(x_{p_-}',x_{p_+})-\phi_{p_+}(x_{p_+}))- (c(x_{p_-},x_{p_+}')-\phi_{p_-}(x_{p_-}))\\
        &= c(x_{p_-}',x_{p_+}')-c(x_{p_-}',x_{p_+})-c(x_{p_-},x_{p_+}') + c(x_{p_-},x_{p_+}) - E(\mbf x).
    \end{align*}
Now we do computations in local charts $\psi_i : U_i \subseteq X_i \to \psi_i(U_i) \subseteq \R^{d_i}$ which are $\contspace^2$ diffeomorphisms such that $B_R(\bar x_i) \subseteq U_i$ for some $R > 0$ and $\psi_i(U_i)$ are balls centered at $0$ for every $i \in \{1,\ldots,m\}$. With a slight abuse, we use the same notation for points and functions written in these charts, and use Taylor's integral formula\footnote{Any linear combination $az_i +by_i$ will designate $\psi_i^{-1}(a\psi_i(z_i) +b \psi_i(y_i))$.}:
\begin{equation*}
E({\mbf x'}) + E({\mbf x}) \geq  \int_0^1 \int_0^1 D^2_{p_- p_+} c(\mbf{x}_{s,t})(x_{p_-}'-x_{p_-}, x_{p_+}'-x_{p_+}) \dd s \dd t
\end{equation*}
where $\mbf{x}_{s,t} \coloneqq (x_{p_-}+(1-s)x_{p_-}',x_{p_+}+(1-t)x_{p_+}')$ for $s,t\in [0,1]$. Since $\abs{D^2_{p_- p_+} c(\mbf x_{s,t})-D^2_{p_- p_+} c(\mbf{\bar x})} \leq \eta(r)$ where $\eta$ is the maximum for $p\in P$ of the moduli of continuity of $D^2_{p^- p^+} c$ at $\mbf{\bar x}$. Since $\eta$ is independent from $p$ and tends to $0$ as $r \to 0$ because $c$ is $\contspace^2$,  and by definition $D^2 c(\mbf {\bar x})(x_{p_-}'-x_{p_-},x_{p_+}'-x_{p_+}) = \frac 12 g_p(\mbf {\bar x})(\mbf{x'}-\mbf{x},\mbf{x'}-\mbf{x})$, it holds:
\begin{align}
    E(\mbf x) + E(\mbf {x'}) 
    &\geq \frac 12 g_p(\mbf{\bar x})(\mbf{x'}-\mbf{x},\mbf{x'}-\mbf{x}) - \eta(r) \norm{\mbf x'-\mbf x}^2.\notag
\end{align}
Taking $g_{\mbf{\bar x}} = \sum_{p\in P} t_p g_p(\mbf{\bar x})$ for some $(t_p)_{p\in P} \in \Delta_P$ and averaging the previous inequality yields:
\begin{equation}\label{lower_bound_energy_g}
    E(\mbf x) + E(\mbf {x'}) \geq \frac 12 g_{\mbf{\bar x}}(\mbf{x'}-\mbf x,\mbf{x'}-\mbf{x}) -\eta(r) \norm{\mbf{x'}-\mbf x}^2.
\end{equation}
Finally, we can find a linear isomorphism $Q \in GL(\sum_{i=1}^m d_i, \R)$ which diagonalizes $g_{\mbf{\bar x}}$, such that after setting $u \coloneqq Q\circ (\psi_1,\ldots, \psi_m)$ and denoting $u = (u^+,u^-,u^0) : \prod_{i=1}^m U_i \to \R^{d^+}\times \R^{d^-} \times \R^{d^0}$, where $(d^+,d^-,d^0)$ is the signature of $g_{\mbf{\bar x}}$, it holds:
\[ \frac 14 g_{\mbf{\bar x}}(\mbf{x'}-\mbf x,\mbf{x'}-\mbf x)  = \abs{u^+(\mbf{x'})-u^+(\mbf{x})}^2-\abs{u^-(\mbf{x'})-u^-(\mbf{x})}^2.\]
Reporting this in \labelcref{lower_bound_energy_g}, we get the result by replacing $\eta$ with $\norm{Q}^{-1}\eta$ and restricting $u$ to $U \coloneqq u^{-1}(B_\rho^{d^+}(0) \times B_\rho^{d^-}(0) \times B_\rho^{d^0}(0))$ for some small $\rho >0$.
\end{proof}

We will use the following positive signature condition:
\begin{equation}\tag{PS$(\kappa)$}\label{signature-condition}
\begin{gathered}
    \text{for every $\mbf{x} \in \mbf X$,}\quad d^+_c(\mbf x) \geq \kappa \quad\text{where}\quad d^{\pm}_c(\mbf x) \coloneqq \max \left\{ d^\pm(g)(\mbf{x})\;|\;g \in G_c\right\}.
    \end{gathered}
    \end{equation}

\begin{proposition}\label{prop:lower-bound}
Let $c\in\contspace^{2}(\mbf X)$ and assume that for every $i \in \{1,\ldots, m\}$, $X_i \subseteq \R^N$ is a $\contspace^2$ sub-manifold of dimension $d_i$ and $\mu_i \in L^\infty(\hdm^{d_i}_{X_i})$ is a probability measure compactly supported in $X_i$. If \labelcref{signature-condition} is satisfied, then there exists a constant $C_*\in [0,\infty)$ such that for every $\eps >0$,
\begin{equation}\label{lower_bound}
\boxed{\MOT_\eps \geq \MOT_0 + \frac \kappa 2 \eps\log(1/\eps) - C_* \eps.}
\end{equation}
\end{proposition}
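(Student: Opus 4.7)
My strategy is to exploit the dual formulation \labelcref{pb:dual_meot_lse} to reduce the lower bound to an upper estimate on a Laplace-type integral. Plugging any system of $c$-conjugate Kantorovich potentials $(\phi_1,\ldots,\phi_m)$ optimal for \labelcref{pb:mmot_dual} as a feasible point into \labelcref{pb:dual_meot_lse} yields
\[\MOT_\eps \geq \MOT_0 - \eps\log\int_{\mbf X} e^{-E(\mbf x)/\eps}\,\dd\otimes_{i=1}^m\mu_i(\mbf x),\]
where $E \coloneqq c - \oplus_{i=1}^m \phi_i \geq 0$ is the duality gap. Hence it suffices to prove that $\int e^{-E/\eps}\,\dd\otimes_{i=1}^m\mu_i \leq C\eps^{\kappa/2}$ for some $C>0$ and all small $\eps>0$, since then $\MOT_\eps \geq \MOT_0 + \tfrac{\kappa}{2}\eps\log(1/\eps) - \eps\log C$.

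I would cover the compact set $\mbf K = K_1\times\cdots\times K_m$ (with each $K_i$ a compact neighborhood of $\spt\mu_i$ in $X_i$) by finitely many balls $B_{r_k}(\mbf{\bar x}_k)$ such that: (i) by \labelcref{signature-condition} there exists $g_k\in G_c$ with $d^+(g_k(\mbf{\bar x}_k)) \geq \kappa$; (ii) $r_k$ is small enough that \Cref{gap_lemma} applies on $B_{r_k}(\mbf{\bar x}_k)$ with $\eta(r_k) \leq 1/2$. Then \Cref{gap_lemma} supplies bi-Lipschitz local coordinates $u_k = (u^+_k, u^-_k, u^0_k) : U_k \to \R^{d^+_k}\times\R^{d^-_k}\times\R^{d^0_k}$, with $d^+_k \geq \kappa$ and $B_{r_k}(\mbf{\bar x}_k)\subseteq U_k$, in which the gap inequality \labelcref{gap_inequality} holds.

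The core of the argument is a Minty-type property of $E$ along fibers of $(u^-_k, u^0_k)$. For fixed $(v^-, v^0)$, consider the slice $F_{v^-,v^0} \coloneqq \{v^+\in\R^{d^+_k} : u_k^{-1}(v^+,v^-,v^0) \in B_{r_k}(\mbf{\bar x}_k)\}$ and define $f(v^+) \coloneqq E(u_k^{-1}(v^+,v^-,v^0))$ on it. Applying \labelcref{gap_inequality} to pairs $\mbf x,\mbf x'$ sharing the same $(u^-_k, u^0_k)$-coordinates collapses both the negative $|u^-|^2$-term and the cross $\eta$-term to expressions involving only $|v_a - v_b|^2$, giving
\[ f(v_a) + f(v_b) \geq 2(1 - \eta(r_k))\,|v_a - v_b|^2 \geq |v_a - v_b|^2 \qquad \forall v_a, v_b \in F_{v^-,v^0}. \]
Since $f \geq 0$, the sublevel sets satisfy $\diam\{f<t\} \leq \sqrt{2t}$, hence $|\{f<t\}|\leq \omega_{d^+_k}(t/2)^{d^+_k/2}$ in $\R^{d^+_k}$, and a layer-cake argument then yields
\[ \int_{F_{v^-,v^0}} e^{-f(v^+)/\eps}\,\dd v^+ = \int_0^\infty |\{f < s\eps\}|\, e^{-s}\,\dd s \leq \omega_{d^+_k}(\eps/2)^{d^+_k/2}\,\Gamma(d^+_k/2+1) \leq C'\eps^{\kappa/2}, \]
uniformly in $(v^-,v^0)$ (using $d^+_k \geq \kappa$ and $\eps \leq 1$).

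To conclude, since $u_k$ is bi-Lipschitz and each $\mu_i\in L^\infty(\hdm^{d_i}_{X_i})$, the pushforward $(u_k)_\sharp(\otimes_i\mu_i\mres B_{r_k}(\mbf{\bar x}_k))$ has bounded density with respect to Lebesgue measure on $u_k(B_{r_k}(\mbf{\bar x}_k))$. Fubini combined with the fibered estimate above dominates each local contribution by $C_k\eps^{\kappa/2}$, and summing over the finite cover delivers the required $C\eps^{\kappa/2}$ bound. The main obstacle is precisely this passage from the pairwise gap inequality of \Cref{gap_lemma}, whose right-hand side is a priori spoiled by the negative $|u^-|^2$-term reflecting the indefinite signature of $g_{\mbf{\bar x}}$, to an effective upper bound on the integral: restricting the inequality to fibers of the negative/null directions is what kills the problematic term, and the sublevel-set/layer-cake argument then replaces the pointwise quadratic lower bound, which is unavailable, by a weaker measure-theoretic statement still sufficient to yield the sharp power $\eps^{\kappa/2}$.
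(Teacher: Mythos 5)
Your proof is correct and follows essentially the same strategy as the paper: pass to the dual \labelcref{pb:dual_meot_lse}, reduce to a Laplace-type bound on $\int e^{-E/\eps}\dd\otimes\mu_i$, invoke \Cref{gap_lemma} for local coordinates with $d^+_k\geq\kappa$, push forward the $L^\infty$-densities through the bi-Lipschitz charts, and argue slice by slice in the $(u^-,u^0)$-fibers. The one technical variation is in the final integration: the paper introduces a fiberwise minimizer $f^+(u^-,u^0)$ of $E_{\mbf{\bar x}}(\cdot,u^-,u^0)$, obtains the pointwise bound $E_{\mbf{\bar x}}\geq\frac12|u^+-f^+(u^-,u^0)|^2$, and then integrates the resulting Gaussian, whereas you avoid the minimizer and instead use the two-point inequality $f(v_a)+f(v_b)\geq |v_a-v_b|^2$ directly to control the diameter of the sublevel sets $\{f<t\}$ and conclude via a layer-cake integral; both routes produce the same $\eps^{\kappa/2}$ power. (Your claim that the pointwise quadratic bound is \enquote{unavailable} is not quite right — the minimizer trick supplies it — but your diameter argument is a valid substitute.) One small slip: a set of diameter $D$ in $\R^n$ need not be contained in a ball of radius $D/2$; Jung's theorem gives radius $D\sqrt{n/(2(n+1))}$, or crudely one can use radius $D$, so the volume bound should read $\omega_{d^+_k}(2t)^{d^+_k/2}$ rather than $\omega_{d^+_k}(t/2)^{d^+_k/2}$ — this only changes the constant $C'$, not the rate.
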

\begin{proof}
The measures $\mu_i$ being supported on some compact subsets $K_i\subseteq X_i$, consider a family $(\phi_i)_{1\leq i \leq m} \in \prod_{i=1}^m \contspace(K_i)$ of $c$-conjugate Kantorovich potentials. Taking $(\phi_i)_{1\leq i \leq m}$ as competitor in \labelcref{pb:dual_meot_lse}, we get the lower bound:
\begin{align*}
\MOT_\eps &\geq \sum_{i=1}^m \int_{K_i} \phi_i \dd\mu_i -\eps \log\left(\int_{\mbf K} e^{-\frac{E}\eps} \dd \otimes_{i=1}^m \mu_i\right)\\
&= \MOT_0-\eps \log\left(\int_{\mbf K} e^{-\frac{E}\eps} \dd \otimes_{i=1}^m \mu_i\right),
\end{align*}
where $E \coloneqq c-\oplus_{i=1}^m \phi_i$ on $\mbf{K} = \prod_{i=1}^m K_i$ as in \Cref{gap_lemma}. We are going to show that for some constant $C>0$ and for every $\eps >0$,
\[\int_{\mbf K} e^{-E/\eps} \dd \otimes_{i=1}^m \mu_i \leq C \eps^{\kappa/2},\]
which yields \labelcref{lower_bound} with $C_* = \log(C)$.

For every $\mbf{\bar x} \in \mbf K$, we consider a quadratic form $g_{\mbf{\bar x}} \in \{g(\mbf{\bar x}) \;|\; g\in G_c\}$ of signature $(\kappa, d^-, d^0)$, which is possible thanks to \labelcref{signature-condition}, and take a local chart\footnote{Although $U$, $R$, $d^-$ and $d^0$ depend on $\mbf{\bar x}$, we do not index them with $\mbf{\bar x}$ so as to ease notations.}
\[u_{\mbf \bar x} : U \subseteq \mbf X \to B^\kappa_R(0) \times B^{d^-}_R(0)\times B^{d^+}_R(0)\]
as given by \Cref{gap_lemma}, such that \labelcref{gap_inequality} holds with $\eta(r) \leq 1/2$ for every $r$ such that $B_r(\mbf{\bar x}) \subseteq U$. Notice that $u_{\mbf{\bar x}}$ is bi-Lipschitz with some constant $L_{\mbf{\bar x}}$ on $V_{\mbf{\bar x}} \coloneqq u_{\mbf{\bar x}}^{-1}(B^\kappa_{R/2}(0) \times B^{d^-}_{R/2}(0) \times B^{d^0}_{R/2}(0))$.

For every $i\in \{1,\ldots,m\}$ we may write $\mu_i = f_i \hdm^{d_i}_{X_i}$ for some density $f_i : X_i \to \R_+$. By applying several times the co-area formula \cite[Theorem~3.2.22]{federerGeometricMeasureTheory1996} to the projection maps onto $X_i$, we may justify that
\[\hdm^{d}_{\mbf X} = \otimes_{i=1}^m \hdm^{d_i}_{X_i}\quad\text{where}\quad d \coloneqq \sum_i d_i.\]
We set $E_{\mbf{\bar x}} \coloneqq E \circ u_{\mbf{\bar x}}^{-1} : B^\kappa_{R}(0) \times B^{d^-}_{R}(0) \times B^{d^0}_{R}(0) \to [0,+\infty]$ and we apply the area formula:
\begin{align*}
\MoveEqLeft \int_{V_{\mbf{\bar x}}} e^{-E/\eps} \dd \otimes_{i=1}^m \mu_i = \int_{V_{\mbf{\bar x}}} e^{-E/\eps} \otimes_{i=1}^m f_i \dd \hdm^d_{\mbf X} \\
&= \int_{B^\kappa_{R/2}(0) \times B^{d^-}_{R/2}(0) \times B^{d^0}_{R/2}(0)} e^{-E_{\mbf{\bar x}}/\eps} \otimes_{i=1}^m f_i J u_{\mbf{\bar x}}^{-1}\dd \hdm^\kappa \otimes\hdm^{d^-} \otimes\hdm^{d^0}\\
&\leq L_{\mbf{\bar x}} \prod_{i=1}^m \norm{\mu_i}_{L^\infty(\hdm^{d_i}_{X_i})}  \int_{B^\kappa_{R/2}(0) \times B^{d^-}_{R/2}(0) \times B^{d^0}_{R/2}(0)} e^{-E_{\mbf{\bar x}}(u^+,u^-,u^0)/\eps} \dd (u^+, u^-,u^0).
\end{align*}
Now, for every $(u^-,u^0) \in B^{d^-}_{R/2}(0) \times B^{d^0}_{R/2}(0)$, consider a minimizer of $E_{\mbf{\bar x}}(\cdot,u^-,u^0)$ over $\bar B^\kappa_{R/2}(0)$ denoted by $f^+(u^-,u^0)$. By \labelcref{gap_inequality} of \Cref{gap_lemma}, for every $(u^+,u^-,u^0)\in B^\kappa_{R/2}(0) \times B^{d^-}_{R/2}(0) \times B^{d^0}_{R/2}(0)$,
\begin{align*}
E_{\mbf{\bar x}}(u^+,u^-,u^0) &\geq \frac{1}{2} (E_{\mbf{\bar x}}(f^+(u^-,u^0),u^-,u^0)+ E_{\mbf{\bar x}}(u^+,u^-,u^0))\\
&\geq (1- 1/2) \abs{u^+-f^+(u^-,u^0)}^2  =\frac{1}{2} \abs{u^+-f^+(u^-,u^0)}^2.
\end{align*}
As a consequence we obtain:
\begin{align*}
&\qquad \int_{B^\kappa_{R/2}(0) \times B^{d^-}_{R/2}(0) \times B^{d^0}_{R/2}(0)} e^{-E_{\mbf{\bar x}}(u^+,u^-,u^0)/\eps} \dd(u^+,u^-,u^0)\\
&\leq  \int_{B^{d^-}_{R/2}(0) \times B^{d^0}_{R/2}(0)} \int_{B^\kappa_{R/2}(0)} e^{-\frac{\abs{u^+-f^+(u^-,u^0)}^2}{2\eps}} \dd u^+ \dd(u^-,u^0)\\
&\leq \eps^{\kappa/2} \omega_{d^-}\omega_{d^0} R^{d^- +d^0} \int_{\R^\kappa} e^{-\abs{u}^2/2} \dd u = C_{\mbf{\bar x}} \eps^{\kappa/2}
\end{align*}
for some constant $C_{\mbf{\bar x}} > 0$ (which depends on $\mbf{\bar x}$ through $R$, $d^-$ and $d^0$). The sets $\{V_{\mbf{\bar x}}\}_{\mbf{\bar x}\in \mbf\Sigma}$ form an open covering of the compact set $\mbf\Sigma \coloneqq \{\mbf x \in \mbf K\;|\;E(\mbf x) = 0\}$, hence we may extract a finite covering $V_{\mbf{\bar x_1}}, \ldots, V_{\mbf{\bar x_L}}$ and for every $\eps > 0$:
\[\int_{\bigcup_{\ell=1}^L V_{\mbf{\bar x_\ell}}} e^{-E/\eps} \dd \otimes_{i=1}^m \mu_i \leq \eps^{\kappa/2}\Biggl(\sum_{\ell=1}^L L_{\mbf{\bar x_\ell}} C_{\mbf{\bar x_\ell}} \Biggr) \Biggl(\prod_{i=1}^m \norm{\mu_i}_{L^\infty(\hdm^{d_i}_{X_i})}\Biggr) = C_1 \eps^{\kappa/2},\]
for some constant $C_1 \in (0,+\infty)$. Finally, since $E$ is continuous and does not vanish on the compact set $K' \coloneqq \mbf K \setminus \bigcup_{\ell=1}^L V_{\mbf{\bar x_\ell}}$, it is bounded from below on $\mbf{K'}$ by some constant $C_2 > 0$. Therefore, for every $\eps > 0$,
\[\int_{\mbf K} e^{-E/\eps} \dd \otimes_{1\leq i\leq m} \mu_i \leq C_1\eps^{\kappa/2} + e^{-C_2/\eps} \leq  C \eps^{\kappa/2},\]
for some constant $C > 0$. This concludes the proof.
\end{proof}

\section{Examples and matching bound}
\label{sec:examples}
We devote this section to applying the results we have stated above to several cost functions. For simplicity we can assume that  the dimensions of the  $X_i$ are all equal to some common $d$ and the cost function $c$ is $\contspace^2$. As in \cite{passLocalStructureOptimal2012} we consider, for the lower bound, the metric $\overline g$ such that $t_p=\frac{1}{2^{m-1}-1}$ for all $p\in P$, we remind that $P$ is the set of partition of $\{1,\ldots,m\}$ into two non empty disjoint subsets.

\begin{example}[{\bfseries Two marginals case}]
  In previous works \cite{carlierConvergenceRateGeneral2023,ecksteinConvergenceRatesRegularized2023} concerning the rate of convergence for the two marginals problem, it was assumed that the cost function must satisfy a non degeneracy condition, that is $D_{x_1x_2}^2c$ must be of full rank. A direct consequence of our analysis is that we can provide a lower bound (the upper bound does not depend on such a condition) for costs for which the non-degeneracy condition fails. Let $r$ be the rank of $D_{x_1x_2}^2 c$ at the point where the non-degeneracy condition fails, then the signature of $\overline g$ at this point is given by $(r,r,2d-2r)$ meaning that  locally the support of the optimal $\gamma_0$ is at most $2d-r$ dimensional. Thus, the bounds become
  \begin{equation*}
\boxed{\frac r 2 \eps\log(1/\eps) - C_* \eps \leq \OT_\eps - \OT_0 \leq \frac d 2 \eps\log(1/\eps) + C^*\eps,}    
\end{equation*}
for some constants $C_*,C^*>0$. Notice that if $D_{x_1,x_2}^2c$ has full rank then $r=d$ and we retrieve the matching bound results of \cite{carlierConvergenceRateGeneral2023,ecksteinConvergenceRatesRegularized2023}.
\end{example}
\begin{example}[{\bfseries Two marginals case and unequal dimension}]
\label{ex_unequal}
Consider now the two marginals case but unequal dimensional, that is for example $d_1>d_2$. Then, if $D_{x_1,x_2}^2c$ has full rank, that is $r=d_2$, we obtain a matching bound depending only on the lower dimensional marginal
  \begin{equation*}
\boxed{\frac{d_{2}} {2} \eps\log(1/\eps) - C_* \eps \leq \OT_\eps - \OT_0 \leq \frac{d_{2}} {2} \eps\log(1/\eps) + C^*\eps,}    
\end{equation*}
for some constants $C_*,C^*>0$. If $\mu_1$ is absolutely continuous with respect to $\hdm^{d_1}$ on some smooth sub-manifold of dimension $d_1$, then any optimal transport plan would be concentrated on a set of Hausdorff dimension no less than $d_1$, and thus the upper bound given in \cite[Theorem~3.8]{ecksteinConvergenceRatesRegularized2023} would be $\frac{d_1}2\eps\log(1/\eps) + O(\eps)$, which is strictly worse than our estimate.
\end{example}
\begin{example}[{\bfseries Negative harmonic cost}]
Consider the cost $c(x_1,\ldots,x_m)=h(\sum_{i=1}^mx_i)$
where $h$ is $\contspace^2$ and $D^2h>0$. Assuming that the marginals
have finite second  moments, when $h(x)=|x|^2$ this kind of cost is 
equivalent to the harmonic negative cost that is 
$c(x_1,\ldots,x_m)=-\sum_{i<j}|x_i-x_j|^2$ (here $|\cdot|$
denotes the standard euclidean norm), see 
\cite{dimarinoOptimalTransportationTheory2017} for more
details. 
It follows now that the signature of the metric $\overline g$ is $(d,(m-1)d,0)$ thus the bounds between $\MOT_\eps$ and $\MOT_0$ that we obtain are 
\begin{equation*}
\boxed{\frac d 2 \eps\log(1/\eps) - C_* \eps \leq \MOT_\eps - \MOT_0 \leq \frac 1 2\Bigl((m-1)d\Bigr) \eps\log(1/\eps) + C^*\eps,}    
\end{equation*}
for some constants $C_*,C^*>0$. We remark that it is known from \cite{passLocalStructureOptimal2012,dimarinoOptimalTransportationTheory2017} that a transport plan $\gamma_0$ is optimal if and only if it is supported on the set $\{(x_1,\ldots,x_m)\;|\;\sum_{i=1}^mx_i=l\}$, where $l\in\R^d$ is any constant and there exists solutions  whose support has dimension exactly $(m-1)d$. 
\end{example}

\begin{example}[{\bfseries Gangbo-Święch cost and Wasserstein barycenter}]
Suppose that $c(x_1,\ldots,x_m)=\sum_{i<j}|x_i-x_j|^2$, known as the Gangbo-Święch cost \cite{gangboOptimalMapsMultidimensional1998}. Notice that the cost is equivalent  to $c(x_1,\ldots,x_m)=h(\sum_{i=1}^m x_i)$ where $h$ is $\contspace^2$ and $D^2h <0$,then the signature of $\overline g$ is $((m-1)d,d,0)$ and we have a matching bound
\begin{equation*}
\boxed{\frac 1 2 \Bigl((m-1)d\Bigr)\eps\log(1/\eps) - C_* \eps \leq \MOT_\eps - \MOT_0 \leq \frac 1 2\Bigl((m-1)d\Bigr) \eps\log(1/\eps) + C^*\eps.}    
\end{equation*}
Notice now that considering the $\MOT_0$ problem with a cost
$c(x_1,\ldots,x_m)=\sum_{i}|x_i-T(x_1,\ldots,x_m)|^2$, where
$T(x_1,\ldots,x_m)=\sum_{i=1}^m\lambda_ix_i$ is the Euclidean barycenter , is equivalent to the $\MOT_0$ with the Gangbo-
Święch cost and the matching bound above still holds. 
Moreover, the multi-marginal problem with this particular cost
has been shown \cite{aguehBarycentersWassersteinSpace2011} to be 
equivalent to the Wasserstein barycenter, that is $T_\sharp\gamma_0=\nu$
is the barycenter of $\mu_1,\ldots,\mu_m$.
\end{example}

\smallskip

\noindent{\textbf{Acknowledgments.}} L.N. is partially on academic leave at Inria (team Matherials) for the year 2022-2023 and acknowledges the hospitality if this institution during this period. His work was supported by a public grant as part of the Investissement d'avenir project, reference ANR-11-LABX-0056-LMH, LabEx LMH and  from H-Code, Université Paris-Saclay. P.P. acknowledges the academic leave provided by Inria Paris (team MOKAPLAN) for the year 2022-2023. Both authors acknowledge the financial support by the ANR project GOTA (ANR-23-CE46-0001).

\printbibliography
\end{document}